\newcommand{\R}{\mathbb R}
\newcommand{\Z}{\mathbb Z}
\newcommand{\p}{\partial}
\newcommand{\sgn}{\text{sgn}}
\newcommand{\hil}{\textsf{H}}
\newtheorem{theorem}{Theorem}[section]
\newtheorem{proposition}[theorem]{Proposition}
\newtheorem{remark}[theorem]{Remark}
\newtheorem{lemma}[theorem]{Lemma}
\newtheorem{corollary}[theorem]{Corollary}
\newtheorem*{TA}{Theorem A}
\newtheorem*{TB}{Theorem B}
\numberwithin{equation}{section}
\begin{document}
\title[Regularity propagation BO equation]{On the propagation of regularities in solutions of the Benjamin-Ono equation}
\author{Pedro Isaza}
\address[P. Isaza]{Departamento  de Matem\'aticas\\
Universidad Nacional de Colombia\\ A. A. 3840, Medellin\\Colombia}
\email{pisaza@unal.edu.co}

\author{Felipe Linares}
\address[F. Linares]{IMPA\\
Instituto Matem\'atica Pura e Aplicada\\
Estrada Dona Castorina 110\\
22460-320, Rio de Janeiro, RJ\\Brazil}
\email{linares@impa.br}

\author{Gustavo Ponce}
\address[G. Ponce]{Department  of Mathematics\\
University of California\\
Santa Barbara, CA 93106\\
USA.}
\email{ponce@math.ucsb.edu}
\keywords{Benjamin-Ono  equation,  weighted Sobolev spaces}
\subjclass{Primary: 35Q53. Secondary: 35B05}

\date{8/8/14}
\begin{abstract} We shall deduce some special regularity properties of  solutions to the IVP associated to the Benjamin-Ono equation.
Mainly, for datum $u_0\in H^{3/2}(\R)$ whose restriction  belongs to $H^m((b,\infty))$ for some $m\in\Z^+,\,m\geq 2,$ and $b\in \R$ we  shall prove that the restriction of the corresponding solution $u(\cdot,t)$ belongs to $H^m((\beta,\infty))$ for any $\beta\in \R$ and any $t>0$. Therefore, this  type of regularity of the datum travels with infinite speed to its left as  time evolves.
\end{abstract}

\maketitle
\section{Introduction}

This work is mainly concerned with special properties of solutions to the initial value problem (IVP) associated to the Benjamin-Ono (BO) equation
\begin{equation}\label{bo1}
\begin{cases}
\partial_t u-\partial_x^2\hil u+u\,\partial_x u=0, \hskip10pt x,t\in\R, \\
u(x,0)=u_0(x),
\end{cases}
\end{equation}
where $\hil$ denotes the Hilbert transform,
\begin{equation}
\label{hita}
\begin{split}
\hil f(x)&=\frac{1}{\pi} {\rm p.v.}\big(\frac{1}{x}\ast f\big)(x)\\
&=\frac{1}{\pi}\lim_{\epsilon\downarrow 0}\int\limits_{|y|\ge \epsilon} \frac{f(x-y)}{y}\,dy=(-i\,\sgn(\xi) \widehat{f}(\xi))^{\vee}(x).
\end{split}
\end{equation}

 The BO equation was first deduced by Benjamin
\cite{Be} and Ono \cite{On} as  a model for long internal gravity
waves in deep stratified fluids. Later, it was also shown to be   a
completely integrable system (see \cite{AbFo}, \cite{CoWi} and
references therein). Thus,  solutions of the IVP \eqref{bo1} satisfy
infinitely many conservation laws which provides an \it a priori \rm
estimate for the $H^{n/2}$-norm, $n\in \Z^+$, of the solution
$\,u=u(x,t)\,$ for \eqref{bo1}. Here we shall only consider  real
valued solutions of the IVP \eqref{bo1}.

 Following the definition in \cite{Ka} it is said that the IVP \eqref{bo1} is locally well-posed (LWP) in the function space $X$ if given any datum $u_0\in X$ there exists $T>0$ and a unique solution
$$
u\in C([-T,T] :X)\cap....
$$
of the IVP \eqref{bo1} with the map data-solution, $u_0\mapsto u$, being continuous. If $T$ can be taken arbitrarily large, one says that
the     IVP \eqref{bo1} is globally well posed (GWP) in $X$.

The problem of finding the minimal regularity property, measured in the Sobolev scale
\begin{equation*}
H^s(\R)= \left(1-\partial^2_x\right)^{-s/2} L^2(\R),\;\;\;\;\,s\in\R,
\end{equation*}
 required to guarantee that the IVP  \eqref{bo1}  is locally or globally well-posed in $\,H^s(\R)$  has been extensively studied. Thus, one has the
following list of works  \cite{ABFS}, \cite{Io1}, \cite{Po}, \cite{KoTz1}, \cite{KeKo}, \cite{Ta}, \cite{BuPl} and
 \cite{IoKe} where GWP was established in $H^0(\R)=L^2(\R)$, (for further details  and results regarding the well-posedness of the IVP \eqref{bo1} in $H^s(\R)$  we refer to \cite{MoPi}). It should be pointed out that a result found in \cite{MoSaTz}
(see also \cite{KoTz2})
implies that none well-posedness in $H^s(\R)$ for any $\,s\in\R\,$ for the IVP \eqref{bo1} can be established by a
solely contraction principle argument.

Well-posedness of the IVP \eqref{bo1} has been also studied in  weighted Sobolev spaces
\begin{equation}
\label{spaceZ}
Z_{s,r}=H^s(\R)\cap L^2(|x|^{2r}dx),\;\;\;\;\;\;s,\,r\in\R,
\end{equation}
and
\begin{equation}
\label{spaceZdot}
\dot Z_{s,r}=\{ f\in H^s(\R)\cap L^2(|x|^{2r}dx)\,:\,\widehat
{f}(0)=0\},\;\;\;\;\;\;\;s,\,r\in\R.
\end{equation}
 Notice that the conservation law for solutions of \eqref{bo1}
 $$
 I_1(u_0)=\int_{-\infty}^{\infty} u(x,t)dx=\int_{-\infty}^{\infty} u_0(x)dx,
 $$
 guarantees that the property $\,\widehat{u}_0(0)=0$
 is preserved by the solution flow.

  Motivated by the results  in \cite{Io1}, \cite{Io2}  it was established in
  \cite{FP} that the IVP \eqref{bo1} is :\newline LWP (resp. GWP) in $Z_{s,r}$ for $s>9/8$ (resp. $s\geq 3/2$), $s\geq r$ and $r\in(0,5/2)$,
  \newline and
  \newline
GWP in $\dot Z_{s,r}$ with  $s\geq r$ and $r\in [5/2,7/2)$.
\newline
Moreover, it was also shown in \cite{FP} that the above
results are optimal (for further details and results concerning the
well posedness of the IVP \eqref{bo1} in weighted Sobolev spaces  we
refer to \cite{FLP}).

 Although we shall be mainly engaged with the IVP \eqref{bo1} our results below will also apply to solutions of the IVP associated to the generalized Benjamin-Ono (g-BO) equation
\begin{equation}\label{bok}
\begin{cases}
\partial_t u-\partial_x^2\hil u+u^k\,\partial_x u=0, \hskip10pt x,t\in\R, \;\;k\in \Z^{+},\\
u(x,0)=u_0(x).
\end{cases}
\end{equation}

In this case well-posedness of the IVP for $k\geq 2$ has been considered in \cite{KPV94}, \cite{MoRi1}, \cite{MoRi2}, \cite{KeTa} where GWP for the IVP \eqref{bok} with $k=2$ was obtained in $H^s(\R),\;s\geq 1/2$ and  \cite{Vent} where LWP of the IVP \eqref{bok} was proven in the critical Sobolev space $H^{s_k}(\R)$ with $s_k=1/2-1/k$ for $k\geq 4$ and with  $s>1/3$ for $k=3$.
 Addressing the well-posedness of the IVP \eqref{bok} in the weighted Sobolev spaces \eqref{spaceZ} and \eqref{spaceZdot} one has that the positive results state before for the IVP \eqref{bo1} also applies. However, the optimality of these results in the cases where the power $k$ in \eqref{bok} is an even integer has not been established.

To state our main result we need to describe the space of solutions where we shall be working on.

\begin{TA}{\rm(\cite{Po})}\label{gwp}
If $u_0\in H^{s}(\R)$ with $\,s\geq 3/2$, then there exists  a unique  solution $u=u(x,t)$ of the IVP \eqref{bo1}
such that for any $T>0$
\begin{equation}\label{bo2}
\begin{split}
{\rm (i)}\hskip15pt & u\in C(\R : H^{s}(\R))\cap L^{\infty}(\R : H^{s}(\R)),\\
{\rm(ii)}\hskip15pt & \partial_x u\in L^4([-T,T]: L^{\infty}(\R)), \hskip10pt {\text (Strichartz)},\\
{\rm(iii)}\hskip15pt &\int\limits_{-T}^{T}\int\limits_{-R}^{R}  (|\partial_x D_xu|^2+|\partial_x^2u|^2)(x,t)|^2dxdt\le c_0,\\
{\rm(iv)}\hskip15pt &\int\limits_{-T}^{T}\int\limits_{-R}^{R}  |\partial_x D_x^{r-1/2}u(x,t)|^2dxdt\le c_1,\;\;\;r\in[1,s],
\end{split}
\end{equation}
with $\,c_0=c_0(R, T, \|u_0\|_{3/2,2})\,$ and $\,c_1=c_1(R, T, \|u_0\|_{s,2})$.
\end{TA}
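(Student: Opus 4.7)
To prove Theorem A I would combine the three classical ingredients for non-Picard dispersive well-posedness: an energy estimate, Strichartz bounds for the linear Benjamin-Ono group $U(t)=e^{t\partial_x^2\hil}$, and Kato-type local smoothing. Since \eqref{bo1} cannot be solved by a fixed-point argument, the solution should be constructed as the limit, as $\nu\downarrow 0$, of the parabolic regularizations
\[
\partial_t u^\nu - \partial_x^2\hil u^\nu + u^\nu\partial_x u^\nu = \nu\partial_x^2 u^\nu,\qquad u^\nu(0)=u_0,
\]
which are classically well-posed in $H^\infty(\R)$. One then derives $\nu$-uniform a priori bounds in the norms appearing in (i)--(iv) and passes to the limit by a compactness argument.

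The linear estimates drive the proof. The symbol $e^{it\xi|\xi|}$ of $U(t)$ shares the phase non-degeneracy of the 1D Schr\"odinger group, so stationary phase yields $|U(t)f(x)|\le c|t|^{-1/2}\|f\|_{L^1}$ and by $TT^\ast$ the Strichartz bound $\|D_x^{1/4}U(t)f\|_{L^4_tL^\infty_x}\le C\|f\|_{L^2}$; Duhamel's formula then transfers this to the nonlinear solution and yields (ii). For (iii)--(iv) I would exploit that $\partial_x^2\hil=\partial_x D_x$ is antisymmetric: multiplying $\partial_x^k$ of the equation by $\phi(x)\partial_x^k u$, with $\phi\in C_0^\infty(\R)$ equal to $1$ on $[-R,R]$, and integrating by parts, produces the Kato smoothing identity
\[
\tfrac{d}{dt}\!\int\phi\,(\partial_x^k u)^2\,dx+\int\phi'\,|D_x^{1/2}\partial_x^k u|^2\,dx = \text{terms controllable by (i)--(ii)},
\]
whose integration in $t$, iterated over $k$, delivers (iii) and (iv).

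The $H^s$ energy step uses the Kato-Ponce commutator estimate on the nonlinearity to obtain $\tfrac{d}{dt}\|u\|_{s,2}^2\le C\|\partial_x u\|_{L^\infty_x}\|u\|_{s,2}^2$; H\"older in time and (ii) make $\int_0^T\|\partial_x u\|_{L^\infty_x}dt$ finite, and Gr\"onwall closes (i). The main obstacle is the critical regularity $s=3/2$, at which Sobolev embedding just fails to place $\partial_x u$ in $L^\infty_x$, leaving the Strichartz gain as the sole available source of such a bound. One must therefore coordinate the energy inequality and the Strichartz estimate in a single Gr\"onwall argument that simultaneously absorbs the commutator $[D_x^{3/2},u]\partial_x u$, obtaining a lifespan depending only on $\|u_0\|_{3/2,2}$. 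Uniqueness follows from the same bound applied to the difference of two solutions, and global existence results from conservation of the $L^2$ norm and of the Hamiltonian (which controls $\|u\|_{H^{1/2}}$) together with the propagated $H^{3/2}$ bound; the regularity $s>3/2$ is then obtained either by bootstrapping the same energy/smoothing scheme or by invoking the higher conservation laws from the integrable structure.
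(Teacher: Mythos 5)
Theorem A is not proved in this paper; it is quoted from Ponce \cite{Po}, and the section heading ``Proof of Theorem A'' is a misprint --- that section in fact proves Theorem \ref{Th1}. So there is no internal argument to compare with, and your sketch has to be measured against Ponce's original one.

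Your outline does reproduce the overall architecture of \cite{Po}: parabolic regularization, $\nu$-uniform a priori bounds, Kato-type local smoothing for (iii)--(iv), and a commutator energy inequality closed by Gronwall together with (ii). The gap is in the pivotal Strichartz estimate, and it sits exactly where the $s=3/2$ difficulty lives. The BO group $U(t)$ has quadratic symbol $e^{-it\xi|\xi|}$; the dispersive bound $\|U(t)f\|_{\infty}\lesssim |t|^{-1/2}\|f\|_{1}$ together with $TT^{\ast}$ yields only $\|U(t)f\|_{L^4_TL^\infty_x}\lesssim \|f\|_{L^2}$, with \emph{no} $D_x^{1/4}$ gain --- that factor belongs to the Airy group $e^{-t\partial_x^3}$ of KdV, whose cubic phase produces it, not to BO. Without it, the Duhamel contribution to $\partial_x u$ involves $u\,\partial_x^2u$, and for $u\in C([0,T]:H^{3/2}(\R))$ one cannot place $\partial_x^2u$ in $L^2_x$, so the retarded Strichartz bound via $\|\partial_x(u\partial_x u)\|_{L^1_TL^2_x}$ that you plan to invoke is simply not available. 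This is precisely why $s=3/2$ is a nontrivial threshold: \cite{Po} resolves it by combining the Kenig--Ponce--Vega local smoothing $\|D_x^{1/2}U(t)f\|_{L^\infty_xL^2_T}\lesssim\|f\|_{L^2}$ and its dual inhomogeneous version (which recovers the extra derivative in a mixed $L^\infty_xL^2_T$ norm) with a frequency splitting of the solution, while the later refinements \cite{KoTz1}, \cite{KeKo} rely on a short-time, Littlewood--Paley localized Strichartz inequality. Your proposal compresses this central ingredient into a single inequality that is false for BO, so as written the argument does not deliver (ii), and with it (i), at the claimed regularity.
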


 \begin{remark} From our previous comments it is clear that the results in Theorem A still holds for the IVP \eqref{bok} with $k=2$
(see \cite{KeKo}) and locally in time for $k\geq 3$. Indeed, one can
lower the requirement $s\geq 3/2$ to a value between $[1, 3/2]$,
depending on the $k$ considered, such that  a well-posedness
including the estimate \eqref{bo2} (ii) still holds. However, we
will not consider this question here.
\end{remark}

 To state our main result we introduce the two parameter family $(\epsilon, b)$ with $\epsilon>0, \,b\geq 5\epsilon$ of functions $\chi_{\epsilon,b}\in C^{\infty}(\R)$ such that $\chi'_{\epsilon,b}(x)\geq 0$ with
\begin{equation}
\label{1.1}
\chi_{\epsilon,b}(x)=\begin{cases}
0,\hskip10pt x\le \epsilon,\\
1, \hskip 10pt x\ge b,
\end{cases}
\end{equation}
satisfying \eqref{2.2}-\eqref{2.8} in section 2.

\begin{theorem}\label{Th1}
Let $u_0\in H^{3/2}(\R)$ and $u=u(x,t)$ be the corresponding solution of the IVP \eqref{bo1} provided by  Theorem A. If for some $x_0\in\R$ and for some $m\in\Z^{+}$, $m\ge 2$,
\begin{equation}\label{bo3}
\int\limits_{x_0}^{\infty} (\partial_x^m u_0)^2(x)\,dx <\infty,
\end{equation}
then for any $v>0$, $T>0$, $\epsilon>0$, $b\geq 5\epsilon$
\begin{equation}\label{bo4}
\begin{split}
&\underset{0\le t\le T}{\sup} \,\int (\partial_x^m u(x,t))^2\,\chi_{\epsilon,b}(x-x_0+vt)\,\,dx\\
&+\int\limits_0^T\int (D_x^{1/2}\partial_x^mu(x,t))^2 \chi_{\epsilon,b}'(x-x_0+vt)\,\,dxdt<c=c(T, \epsilon, b, v).
\end{split}
\end{equation}
\vskip.1in

If  in addition to \eqref{bo3}  there exists  $x_0\in \R$ such that
any $\epsilon>0$, $b>5\epsilon$
\begin{equation}\label{bo5}
D^{1/2}_x(\partial_x^m u_0\,\chi_{_{\epsilon, b}}(\cdot-x_0)) \in L^2(\R),
\end{equation}
then
\begin{equation}\label{bo6}
\begin{split}
&\underset{0\le t\le T}{\sup} \int (D_x^{1/2}(\partial_x^mu(x,t)\,\chi_{_{\epsilon, b}}(x-x_0+vt)))^2\,dx\\
&+\int\limits_0^T\int (\partial_x^{m+1}u(x,t))^2 \,\chi_{_{\epsilon, b}}'\,\chi_{_{\epsilon, b}}(x-x_0+vt)\,dxdt<c,
\end{split}
\end{equation}
\end{theorem}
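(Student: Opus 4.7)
I would prove both estimates by induction on $m\ge 2$ via a weighted energy method on $\partial_x^m u$, using the moving cutoff
$$\chi(x,t):=\chi_{\epsilon,b}(x-x_0+vt).$$
Because $v>0$ and $\chi'\ge 0$ is supported in $[\epsilon,b]$, the weight sweeps to the left under the flow, so the regularity on $(x_0,\infty)$ at time $0$ is transported while the missing control comes from the global $H^{3/2}$ bound of Theorem~A together with a Kato-type local smoothing gain. I would work with a hierarchy of cutoffs with shrinking $\epsilon$ and growing $b$ (cf.\ the paper's macros of the form $\chi_{0,\epsilon/2,b+\epsilon}$ and $\chi_{0,\epsilon/4,\epsilon/2}$), so that at the inductive step the previously-obtained estimate controls derivatives of order $<m$ on a region where the current cutoff is identically $1$.

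\textbf{Linear part and smoothing gain.} After applying $\partial_x^m$ to \eqref{bo1}, multiply by $2(\partial_x^m u)\chi$ and integrate in $x$. The time derivative produces $\tfrac{d}{dt}\int(\partial_x^m u)^2\chi\,dx-v\int(\partial_x^m u)^2\chi'\,dx$. Using the key identity $\partial_x\hil=|D_x|$ and integrating by parts twice, the dispersive contribution $-\int\partial_x^2\hil\partial_x^m u\cdot(\partial_x^m u)\chi\,dx$ becomes $2\int(D_x^{1/2}\partial_x^m u)^2\chi'\,dx$ plus a remainder of the form $\int[\hil;\chi]\partial_x^{m+1}u\cdot\partial_x^m u\,dx$; the latter is strictly lower order, since $[\hil;\chi]$ gains one derivative relative to $\chi$, and is absorbed by the inductive hypothesis on the enlarged support. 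This produces exactly the local smoothing term on the left-hand side of \eqref{bo4}.

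\textbf{Nonlinear terms.} Expand $\partial_x^m(u\partial_x u)=\sum_{j=0}^m\binom{m}{j}\partial_x^j u\,\partial_x^{m+1-j}u$. The extreme cases $j\in\{0,m\}$ reduce, after one integration by parts, to expressions of the form $\int(\partial_x u)\chi(\partial_x^m u)^2\,dx$ and $\int u\chi'(\partial_x^m u)^2\,dx$, controlled by Gronwall using the Strichartz bound $\partial_x u\in L^4_TL^\infty_x$ from Theorem~A\,(ii) together with $u\in L^\infty_{T,x}$. Intermediate terms $1\le j\le m-1$ are estimated on $\mathrm{supp}(\chi)$ by invoking \eqref{bo4} at order $m-1$ on a larger cutoff (to obtain $\partial_x^j u$ in $L^\infty_TL^2$ there, then upgraded to $L^\infty_x$ by Sobolev embedding/interpolation), paired with the Strichartz factor on the remaining piece. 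The base case $m=2$ is powered by item~(iii) of Theorem~A, which supplies the $L^2_{T,\mathrm{loc}}$ bound on $\partial_x^2 u$ needed to launch the induction. Integrating in $t$ and applying Gronwall then closes \eqref{bo4}.

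\textbf{Half-derivative gain and main obstacle.} For \eqref{bo6}, I would run the same energy scheme on $w:=D_x^{1/2}(\chi\partial_x^m u)$, whose initial data is in $L^2$ by hypothesis \eqref{bo5}. The equation satisfied by $w$ produces additional commutators $[D_x^{1/2};\chi]$ and $[D_x^{1/2};u]$ that are handled by Kato--Ponce-type fractional commutator estimates; the identity $\partial_x\hil=|D_x|$ now generates the local smoothing term $\int(\partial_x^{m+1}u)^2\chi'\chi\,dx$ on the left, and every remainder is either of the type already treated in Steps 2--3 or is absorbed by the previously proved \eqref{bo4} at the same order $m$. The principal obstacle throughout is to rigorously justify these high-order manipulations starting only from $u_0\in H^{3/2}$: I would regularize $u_0$ to $u_0^\delta\in H^\infty$ in such a way that both \eqref{bo3} and \eqref{bo5} remain uniform in $\delta$ (using only the values of $u_0$ on a neighborhood of $(x_0,\infty)$), run all estimates on the smooth solutions $u^\delta$, and pass to the limit $\delta\downarrow 0$ via the continuity of the data-to-solution map from Theorem~A, together with a weak compactness argument for the local smoothing gains.
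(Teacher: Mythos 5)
Your overall strategy (weighted energy identity on $\partial_x^m u$ with a moving cutoff $\chi_{\epsilon,b}(x-x_0+vt)$, a hierarchy of cutoffs with shrinking $\epsilon$, Strichartz from Theorem~A\,(ii), Gronwall, and a mollification--limit argument) matches the paper's plan, and you correctly identify where the local smoothing term comes from. However, there is one structural point that your write-up does not articulate, and it is precisely the point the paper singles out as the essential difference from the KdV case.

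\textbf{The missing interleaving.} You present \eqref{bo4} and \eqref{bo6} as two statements to be proved, with \eqref{bo6} built on top of the already-proved \eqref{bo4} at the same order $m$. But the induction does not close that way. When you go from order $m$ to order $m+1$ in \eqref{bo4}, the energy identity contains the moving-cutoff term
\begin{equation*}
A_1(t)=v\int(\partial_x^{m+1}u)^2\,\chi'_{\epsilon,b}(x-x_0+vt)\,dx,
\end{equation*}
which involves one full derivative more than anything \eqref{bo4} at order $m$ controls. The local smoothing gain from \eqref{bo4} at order $m$ is only $D_x^{1/2}\partial_x^m u$ in $L^2_{t,\mathrm{loc}}L^2_x$, i.e.\ half a derivative short. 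The term $A_1$ is made integrable in time precisely by the second (smoothing) term of \eqref{bo6} at order $m$, namely $\int_0^T\int(\partial_x^{m+1}u)^2\,\chi'\chi\,dx\,dt<\infty$. So the genuine inductive chain is
\begin{equation*}
\eqref{bo4}_m \ \Longrightarrow\ \eqref{bo5}_m,\eqref{bo6}_m \ \Longrightarrow\ \eqref{bo4}_{m+1},
\end{equation*}
with the implication $\eqref{bo3}_{m+1}\Rightarrow\eqref{bo5}_m$ furnished by interpolation against $u_0\in H^{3/2}$. Your proposal only mentions invoking ``\eqref{bo4} at order $m-1$'' when treating the nonlinear remainders and ``item~(iii) of Theorem~A'' for the base case $m=2$; neither gives control of $\int(\partial_x^{m+1}u)^2\chi'$ for $m\ge 3$. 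Without the intermediate half-step \eqref{bo6} the induction for \eqref{bo4} alone does not close. This is exactly the paper's Remark~(c): since BO's local smoothing gains only $1/2$-derivative (versus one full derivative for KdV), the iteration must be run in two interleaved steps, at integer and half-integer levels.

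\textbf{A secondary point.} The commutator control you describe loosely as ``$[\hil;\chi]$ gains one derivative'' is really the Bajvsank--Coifman extension of Calder\'on's commutator theorem, Theorem~\ref{Cal}, which bounds $\|\partial_x^l[\hil;\psi]\partial_x^m f\|_p$ by $\|\partial_x^{l+m}\psi\|_\infty\|f\|_p$ for arbitrary $l,m$. Because many derivatives ultimately fall on the commutator (cf.\ $A_{21}=\tfrac12\int u\,\partial_x^3[\hil;\chi]\partial_x^3 u\,dx$ and $A_{21}=\tfrac12(-1)^{j+3}\int u\,\partial_x^{j+2}[\hil;\chi]\partial_x^{j+2}u\,dx$ at higher orders), the first-order gain alone is not enough; you need the full strength of this estimate. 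Relatedly, when you write the half-derivative computation for \eqref{bo6}, the relevant tool is the specific bound $\|[D_x^{1/2};h]\partial_x f\|_2\le c\|\widehat{\partial_x h}\|_1\|D_x^{1/2}f\|_2$ of Proposition~2.3 together with the fractional Leibniz rule \eqref{FD}, rather than the general Kato--Ponce inequality you invoke. These are minor in comparison to the interleaving issue, but they are the concrete lemmas you would need to cite to make the commutator remainders close.
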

with $c=c(T, \epsilon, b, v)$.

\begin{theorem}\label{Th2}
With the same hypotheses the results in Theorem \ref{Th1} apply to
solutions of the IVP \eqref{bok} globally in time if $\,k=2$, and
locally in  time if  $\,k\geq 3$.
\end{theorem}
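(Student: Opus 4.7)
The plan is to run the proof of Theorem \ref{Th1} essentially verbatim, replacing the nonlinearity $u\,\p_x u$ throughout by $u^k\,\p_x u$. Two of the three pillars of that proof are purely linear and therefore carry over without change: the favorable transport term $-\tfrac{v}{2}\!\int(\p_x^m u)^2\chi'_{\e,b}\,dx$ produced by differentiating the moving weight $\chi_{\e,b}(\cdot-x_0+vt)$ in time, and the smoothing gain $\int(D_x^{1/2}\p_x^m u)^2\chi'_{\e,b}\,dx$ produced by commuting the cutoff with $\p_x^2\hil$ via the standard Hilbert-transform commutator computation. All the work therefore lies in redoing the nonlinear contributions with $u^k$ in place of $u$.

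Distributing $\p_x^m$ across $u^k\,\p_x u$ by Leibniz and pairing against $\p_x^m u\cdot\chi_{\e,b}$, the top-order contribution $u^k\,\p_x^{m+1}u$ is tamed by one integration by parts, producing terms bounded by $\|u\|_{L^\infty}^{k-1}\|\p_x u\|_{L^\infty}\int(\p_x^m u)^2\chi_{\e,b}\,dx$ together with weight-derivative remainders of the same shape. The remaining terms are sums of products $u^{k-j}(\p_x^{j_1}u)\cdots(\p_x^{j_r}u)\,\p_x^{m-\ell}u$ with $j_1+\cdots+j_r\leq m$ and $j_i\geq 1$, each controlled by Gagliardo--Nirenberg interpolation exactly as in the $k=1$ case but with an extra factor $\|u\|_{L^\infty}^{k-j}$. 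Since $u_0\in H^{3/2}\hookrightarrow L^\infty$ implies $u\in L^\infty([0,T];L^\infty(\R))$ on the existence interval, and $\p_x u\in L^4([0,T];L^\infty(\R))$ by the analog of Theorem A mentioned in the Remark above, all these extra factors are harmless and Gronwall's inequality closes \eqref{bo4}. The second conclusion \eqref{bo6} follows by running the same energy argument on $D_x^{1/2}(\chi_{\e,b}\,\p_x^m u)$, using Kato--Ponce commutator bounds on $[D_x^{1/2},\chi_{\e,b}u^k]$, which again absorb a bounded power of $\|u\|_{L^\infty}$.

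The dichotomy in the statement -- global in time for $k=2$, local in time for $k\geq 3$ -- reflects only the state of the art on well-posedness for \eqref{bok}, i.e.\ the availability of the analog of Theorem A. For $k=2$ the combined results of \cite{KeKo,KeTa,KPV94} yield all four bounds (i)--(iv) globally in time at regularity $H^{3/2}$, and the argument above runs on any $[0,T]$. For $k\geq 3$ only local well-posedness and a local-in-time version of (i)--(iv) are known, so the Strichartz and local-smoothing bounds on $\p_x u$ are only available on the maximal existence interval, and the energy scheme therefore only delivers \eqref{bo4}--\eqref{bo6} on that interval. The main potential obstacle is thus not in the energy estimate itself -- where the extra factors $\|u\|_{L^\infty}^{k-j}$ create no genuine new difficulty -- but in ensuring that the regularized solutions used in the approximation/limit step (truncating the data by smooth profiles so that the integrations by parts are a priori justified, then passing to the limit by continuous dependence) inherit the Strichartz and smoothing estimates on which the scheme rests; this is supplied by the local well-posedness theory cited in the Remark following Theorem A.
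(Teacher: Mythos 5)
Your proposal is correct and matches the approach implicit in the paper: the paper does not spell out a separate proof of Theorem~\ref{Th2} but rests on the observation (made in the Remark following Theorem A) that the well-posedness package of Theorem A holds for \eqref{bok} globally when $k=2$ and locally when $k\geq 3$, after which the energy/commutator argument of Theorem~\ref{Th1} transfers verbatim because the extra factors coming from $u^k$ versus $u$ are controlled by powers of $\|u\|_{L^\infty}$, which is bounded via $H^{3/2}\hookrightarrow L^\infty$ on the relevant time interval. You identify exactly these points — the linear pillars are unchanged, the Leibniz expansion of $\partial_x^m(u^k\partial_x u)$ only introduces harmless $\|u\|_{L^\infty}^{k-j}$ factors, and the global/local dichotomy is inherited from the state of the well-posedness theory — and you correctly note that the approximation/limit step uses continuous dependence from the local theory. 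The only cosmetic difference is that you invoke Kato--Ponce commutator bounds for $[D_x^{1/2},\chi_{\epsilon,b}u^k]$ whereas the paper relies on its Proposition \eqref{CE2} (a commutator estimate in terms of $\|\widehat{\partial_x h}\|_1$), but either tool closes the estimate.
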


\begin {remark}  \hskip10pt

{\rm(a)} From our comments above  and our proof of Theorem \ref{Th1} it will be clear that the requirement $u_0\in H^{3/2}(\R)$ in Theorem \ref{Th1} can be lowered to $u_0\in H^{9/8^+}(\R)$ by considering the problem  in a finite time interval (see \cite{KeKo}).

{\rm (b)} It will be clear from our proofs of Theorem \ref{Th1} and Theorem \ref{Th2} that they still hold for solutions of the \lq\lq defocussing" gBO equation
\begin{equation*}
\partial_t u- H \partial_x^2u-u^k\,\partial_x u=0, \hskip10pt x,t\in\R,\;\;\;k\in \Z^{+}.
\end{equation*}
Therefore, our results apply to $\,u(-x,-t)\,$ if $\,u(x,t)\,$ is a solution of \eqref{bok}. In other words, Theorem \ref{Th1} and Theorem \ref{Th2} resp. remain valid,
backward in time, for datum satisfying the hypothesis \eqref{bo3} and \eqref{bo5} respectively  on the left hand side of the real line. 
\end{remark}

The following are direct outcomes of the above comment, the group properties,  and Theorems \ref{Th1} and
\ref{Th2}.  In order to simplify the exposition we shall state them
only for solutions of the IVP \eqref{bo1}.  First, as a direct
consequence of Theorem \ref{Th1}  and the time reversible character
of the equation in \eqref{bo1} one has :

\begin{corollary}\label{cor.1}
Let $\,u\in C(\R : H^{{3/2}^{+}}(\R))$ be a solution of the equation in \eqref{bo1} described in Theorem A.
 If there exist $ \,m\in\Z^{+},\,m\geq 2,\,\hat t\in\R,\;a\in\R $ such that
\begin{equation*}
\partial_x^mu(\cdot,\hat t)\notin L^2((a,\infty)),
\end{equation*}
then for any $t\in (-\infty,\hat t)$ and any $\beta\in\R$

\begin{equation*}
\partial_x^mu(\cdot,t)\notin L^2((\beta,\infty)).
\end{equation*}
\end{corollary}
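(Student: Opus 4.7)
The plan is to prove the contrapositive: assuming that for \emph{some} $t_* \in (-\infty, \hat t)$ and \emph{some} $\beta \in \R$ one has $\partial_x^m u(\cdot, t_*) \in L^2((\beta, \infty))$, I will deduce that $\partial_x^m u(\cdot, \hat t) \in L^2((a, \infty))$ for \emph{every} $a \in \R$, which contradicts the hypothesis of the corollary. The only real ingredient is Theorem~\ref{Th1}, applied with $t_*$ (rather than $0$) playing the role of the initial time; the ``time reversible character'' of \eqref{bo1} invoked in the preamble amounts here to the statement that the solution $u$ may be regarded as evolving forward from any $t_* \in \R$.

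More precisely, by uniqueness and time--translation invariance of the flow provided by Theorem~A, the function $w(x, \tau) := u(x, t_* + \tau)$ is the solution of \eqref{bo1} with initial datum $w_0 := u(\cdot, t_*) \in H^{{3/2}^{+}}(\R)$, and by the standing assumption $w_0$ satisfies hypothesis \eqref{bo3} with $x_0 = \beta$ and the same $m \geq 2$. Theorem~\ref{Th1} applied to $w$ then yields, for every $\epsilon > 0$, $b \geq 5\epsilon$, every speed $V > 0$, and every $T > 0$,
\begin{equation*}
\sup_{0 \leq \tau \leq T}\,\int (\partial_x^m w(x,\tau))^2\,\chi_{\epsilon,b}(x - \beta + V\tau)\,dx \;\leq\; c(T,\epsilon,b,V),
\end{equation*}
where I have renamed the propagation speed $V$ to avoid clashing with the unknown $u$.

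To finish, set $T := \hat t - t_* > 0$ and evaluate at $\tau = T$. Since $\chi_{\epsilon,b} \equiv 1$ on $[b, \infty)$, the translated cutoff $\chi_{\epsilon,b}(\cdot - \beta + VT)$ equals $1$ on $[\beta + b - VT, \infty)$. Given an arbitrary $a \in \R$, choose $V$ large enough that $\beta + b - VT \leq a$; then
\begin{equation*}
\int_a^\infty (\partial_x^m u(x, \hat t))^2\,dx \;\leq\; \int (\partial_x^m w(x, T))^2\,\chi_{\epsilon,b}(x - \beta + VT)\,dx \;<\; \infty,
\end{equation*}
contradicting the hypothesis that $\partial_x^m u(\cdot, \hat t) \notin L^2((a, \infty))$. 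Since $a$ was arbitrary, the contrapositive is proved.

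There is essentially no obstacle beyond bookkeeping: the freedom to take $V$ arbitrarily large in \eqref{bo4} encodes the ``infinite speed of propagation to the left'' highlighted in the abstract, and this is precisely the mechanism that drives the corollary. The only point worth recording is that restarting the Cauchy problem at $t_*$ is legitimate because $u(\cdot,t_*)\in H^{{3/2}^{+}}(\R)$ and the IVP \eqref{bo1} is well posed in that space by Theorem~A.
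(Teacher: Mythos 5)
Your argument is correct and is precisely the ``direct consequence of Theorem~\ref{Th1}'' that the paper has in mind: restart the flow at the time $t_*$ where regularity is assumed, propagate forward to $\hat t$ with Theorem~\ref{Th1}, and exploit that the speed $v$ in \eqref{bo4} may be taken arbitrarily large (for each fixed $a$ a finite $v$ suffices, so the $v$-dependence of the constant is harmless). One small terminological remark: what you invoke is really the time-translation invariance (group property) of the flow given by Theorem~A rather than time \emph{reversibility}; the paper's phrase is used loosely, and your contrapositive formulation correctly reduces the backward-in-time claim to a forward application of Theorem~\ref{Th1}.
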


 Next, one has  that for appropriate class of data  singularities of the corresponding solutions travel with infinite speed to the left as time evolves.

\begin{corollary}\label{cor.2}
Let $\,u\in C(\R: H^{3/2}(\R))$ be a solution of the equation in \eqref{bo1} described in Theorem A.
 If there exist  $ \,k, m \in\Z^{+}$ with  $k\ge m$ and $a,\, b\in \R$ with $\,b<a$ such that
\begin{equation}\label{A}
\int_{a}^{\infty} |\partial_x^ku_0(x)|^2dx <\infty\;\;\;\text{but}\;\;\;\partial_x^m u_0\notin L^2((b,\infty)),
\end{equation}
then for any $t\in(0,\infty)$ and any $v>0$ and $\epsilon>0$
\begin{equation*}
\int_{a+\epsilon-vt}^{\infty} |\partial_x^ku(x,t)|^2dx <\infty,
\end{equation*}
and for any $t\in(-\infty,0)$ and  $\alpha\in\R$
\begin{equation*}
\int_{\alpha}^{\infty} |\partial_x^m u(x,t)|^2dx =\infty.
\end{equation*}
\end{corollary}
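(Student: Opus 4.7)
The plan is to derive both assertions as direct consequences of Theorem \ref{Th1}; the backward-in-time statement will follow by a contradiction argument that exploits the time-translation invariance of \eqref{bo1}.

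For the forward-in-time conclusion, the first assumption $\int_a^\infty (\partial_x^k u_0)^2\,dx < \infty$ in \eqref{A} is precisely the hypothesis \eqref{bo3} of Theorem \ref{Th1} with $x_0 = a$ and with the index $m$ there replaced by $k \ge m \ge 2$. I would apply Theorem \ref{Th1} directly, choosing weight parameters of $\chi_{\tilde\epsilon,\tilde b}$ satisfying $\tilde b \ge 5\tilde\epsilon$ together with $\tilde b \le \epsilon$ (for instance $\tilde\epsilon = \epsilon/5$, $\tilde b = \epsilon$). Since $\chi_{\tilde\epsilon,\tilde b}\equiv 1$ on $\{y \ge \tilde b\}$, restricting \eqref{bo4} to the region where the weight equals one immediately gives $\partial_x^k u(\cdot,t) \in L^2((a+\epsilon - vt,\infty))$ uniformly on each finite time interval $[0,T]$, and therefore for all $t > 0$.

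For the backward-in-time conclusion I would argue by contradiction: suppose that $\partial_x^m u(\cdot,t_*) \in L^2((\alpha_*,\infty))$ for some $t_* < 0$ and some $\alpha_* \in \R$. Because \eqref{bo1} is autonomous and $u(\cdot,t_*) \in H^{3/2}(\R)$, Theorem \ref{Th1} applies with $u(\cdot,t_*)$ as initial datum (taking $x_0 = \alpha_*$ and index $m$) and propagates this $L^2$ information forward in time by $|t_*|$ to reach time $0$. The same weight-parameter choice then yields $\partial_x^m u_0 \in L^2((\alpha_* - v|t_*| + \tilde b,\infty))$ for every $v > 0$. Sending $v \to \infty$ pushes the left endpoint of this interval below $b$, so $\partial_x^m u_0 \in L^2((b,\infty))$, contradicting the second half of \eqref{A}.

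The main obstacle is essentially bookkeeping: choosing $\tilde b \ge 5\tilde\epsilon$ small enough that the region $\{\chi \equiv 1\}$ matches the half-line one wants to control, and observing that the autonomy of \eqref{bo1} permits applying Theorem \ref{Th1} starting at any time. Neither step requires substantial new analysis---all the real work has been done in Theorem \ref{Th1}, and the ``infinite speed of propagation to the left'' asserted in the corollary emerges as a clean formal consequence of the quantitative smoothing gain established there.
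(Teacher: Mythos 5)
Your proof is correct and is essentially the argument the paper has in mind: the authors present Corollaries \ref{cor.1} and \ref{cor.2} as "direct outcomes" of Theorem \ref{Th1} and the group (time-translation/reversal) properties, which is precisely what you use---Theorem \ref{Th1} applied with $x_0=a$ and $m$ replaced by $k$ for the forward conclusion, and a contradiction argument combining time-translation invariance with the $v\to\infty$ freedom in Theorem \ref{Th1} for the backward conclusion (the latter being exactly the content of Corollary \ref{cor.1} specialized to $\hat t = 0$). One tiny caveat worth flagging: since Theorem \ref{Th1} is only stated for $m\ge 2$, your argument implicitly requires $m\ge 2$ (hence $k\ge 2$), matching the hypothesis made explicit in Corollary \ref{cor.1} but left implicit in the statement of Corollary \ref{cor.2}.
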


\begin{remark}\hskip10pt

{\rm(a)}  If in Corollary \ref{cor.2} in addition to  \eqref{A} one assumes that
$$
\int_{-\infty}^b |\partial_x^k u_0(x)|^2dx<\infty,
$$
then by combining the results in this corollary  with the group properties
it follows that
\begin{equation*}
\int_{-\infty}^{\beta} |\partial_x^m u(x,t)|^2\,dx =\infty, \hskip10pt \text{for any}\hskip 5pt \beta\in\R\hskip5pt\text{and}\hskip5pt t>0.
\end{equation*}
This shows that the regularity in the left hand side does not propagate forward in time.

{\rm(b)} Notice that \eqref{bo4} implies: for any $\epsilon>0$, $v>0$, $T>0$
\begin{equation}\label{bo7}
\underset{0\le t\le T}{\sup}\, \int_{x_0+\epsilon-vt}^{\infty}(\partial_x^k u(x,t))^2\,dx \le c = c( \epsilon,v,T).
\end{equation}

 This tells us that the local regularity of the initial datum $u_0$ described in \eqref{bo3} propagates with infinite speed to its left as time evolves.

{\rm(c)} In \cite{ILP1} we proved the corresponding result concerning the IVP for the $k$-generalized Korteweg-de Vries equation

\begin{equation}\label{gkdv}
\begin{cases}
\partial_t u+\partial_x^3u+u^k\,\partial_x u=0, \hskip10pt x,t\in\R, \;\;k\in \Z^{+},\\
u(x,0)=u_0(x).
\end{cases}
\end{equation}

More precisely, the following result was obtained in \cite{ILP1}:

\begin{TB}
If  $u_0\in H^{{3/4}^{+}}(\R)$ and for some $\,m\in \Z^{+},\,\;m\geq 1$ and $x_0\in \R$
\begin{equation}\label{notes-3}
\|\,\partial_x^m u_0\|^2_{L^2((x_0,\infty))}=\int_{x_0}^{\infty}|\partial_x^m u_0(x)|^2dx<\infty,
\end{equation}
then the solution of the IVP \eqref{gkdv} satisfies  that for any $v>0$ and $\epsilon>0$
\begin{equation}\label{notes-4}
\underset{0\le t\le T}{\sup}\;\int^{\infty}_{x_0+\epsilon -vt } (\partial_x^j u)^2(x,t)\,dx<c,
\end{equation}
for $j=0,1, \dots, m$ with $c = c(l,  v, \epsilon, T)$.

Moreover, for any $v\geq 0$, $\epsilon>0$ and $R>\epsilon$
\begin{equation}\label{notes-5}
\int_0^T\int_{x_0+\epsilon -vt}^{x_0+R-vt}  (\partial_x^{j+1} u)^2(x,t)\,dx dt< c,
\end{equation}
with  $c = c(l, v, \epsilon, R, T)$.
\end{TB}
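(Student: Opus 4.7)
The plan is to prove the estimates by induction on $j\in\{0,1,\dots,m\}$, establishing the supremum bound \eqref{notes-4} and the local-smoothing bound \eqref{notes-5} simultaneously at each level. Throughout, one works with the moving cutoff $\chi=\chi_{\epsilon,b}(x-x_0+vt)$, so that $\partial_t\chi=v\chi'$ and the region $\{\chi=1\}$ coincides with the domain of integration on the left of \eqref{notes-4}. The base case $j=0$ follows from $L^2$-conservation for \eqref{gkdv} together with the classical Kato-smoothing identity: multiply the equation by $2u\chi$, integrate, perform the dispersive integration by parts, and observe that the nonlinear contribution reduces to an integral proportional to $\int u^{k+2}\chi'\,dx$, which is absorbed by $L^2$-conservation and the Sobolev embedding $H^{3/4^+}\subset L^\infty$.

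For the inductive step, fix $(v,\epsilon,b)$ at level $j$ and assume both \eqref{notes-4} and \eqref{notes-5} at level $j-1$ for \emph{every} velocity $v'>v$ and every $0<\epsilon'<\epsilon$; this nested choice of parameters is the essential bookkeeping device. Set $w=\partial_x^j u$. Differentiating $\partial_x^j$ of \eqref{gkdv}, multiplying by $2w\chi$, integrating, and integrating the dispersive term by parts three times yields
\begin{equation*}
\frac{d}{dt}\int w^2\chi\,dx+3\int w_x^2\chi'\,dx=v\int w^2\chi'\,dx+\int w^2\chi'''\,dx-2\int w\chi\,\partial_x^j(u^k\partial_x u)\,dx.
\end{equation*}
The second term on the left is the smoothing gain that will produce \eqref{notes-5} at level $j$. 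The first two terms on the right, involving $w=\partial_x^j u$ only on the support of $\chi'$ (resp. $\chi'''$), are absorbed by choosing the level-$(j-1)$ cutoff with a slightly faster velocity $v'>v$ and smaller $\epsilon'$, so that its derivative majorizes a constant multiple of $\chi'+|\chi'''|$ on the relevant strip, and then invoking \eqref{notes-5} at level $j-1$ applied to $\partial_x^j u$ itself.

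The remaining task, and the main obstacle, is the nonlinear integral $-2\int w\chi\,\partial_x^j(u^k u_x)\,dx$. Expanding via Leibniz separates it into a top-order contribution $\int u^k w\,w_x\,\chi\,dx$, a symmetric contribution proportional to $\int u^{k-1}u_x w^2\chi\,dx$, and intermediate contributions all of whose factors have order $\le j$ (with no $\partial_x^{j+1}u$ factor). The top-order contribution is handled by one integration by parts in $x$,
\begin{equation*}
\int u^k w\,w_x\,\chi\,dx=-\frac{k}{2}\int u^{k-1}u_x w^2\chi\,dx-\frac{1}{2}\int u^k w^2\chi'\,dx,
\end{equation*}
where the second integral rejoins the $\int w^2\chi'$ family already handled and the first, together with the symmetric contribution, is estimated by $\|u\|_{L^\infty}^{k-1}\|u_x\|_{L^\infty_x}\int w^2\chi\,dx$, a Gronwall-friendly right-hand side. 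The ambient $L^\infty$-control of $u$ comes from $H^{3/4^+}\subset L^\infty$, while the $L^q_T L^\infty_x$-control of $\partial_x u$ is supplied by the Strichartz/maximal-function estimates available at the well-posedness threshold $H^{3/4^+}$. The intermediate contributions are handled by H\"older, Sobolev interpolation, and the inductive hypothesis. A Gronwall step in $t\in[0,T]$ then yields \eqref{notes-4} and \eqref{notes-5} at level $j$, completing the induction. The principal difficulty is precisely this top-order term: its factor $\partial_x^{j+1}u$ is the very quantity whose weighted $L^2$-bound \eqref{notes-5} one is seeking, and the integration-by-parts maneuver closes only because the $H^{3/4^+}$-threshold delivers the needed $L^\infty$-type control on $u$ and $\partial_x u$.
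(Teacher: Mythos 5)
Your proposal is correct and follows essentially the same route as the original argument: Theorem B is only quoted here from \cite{ILP1}, but both that proof and this paper's Section 3 for the BO case proceed exactly as you do, via weighted energy identities with the moving cutoffs $\chi_{\epsilon,b}(x-x_0+vt)$, induction on the order of the derivative, absorption of the $\chi'$ and $\chi'''$ terms through the previous step's smoothing bound with nested parameters, integration by parts on the top-order nonlinear term, and Gronwall combined with the $L^4_TL^\infty_x$ Strichartz control of $\partial_x u$ at the $H^{3/4^+}$ level. No essential step is missing.
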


However, the proof for the BO equation considered here is quite more
involved. First, it includes a non-local operator, the Hilbert
transform \eqref{hita}. Second, in the case of the $k$-gKdV the
local smoothing effect yields a gain of one derivative which allows
to pass to the next step in the inductive process. However, in the
case of the BO equation the gain of the local smoothing is just
$1/2$-derivative so the iterative argument has to be  carried out
in two steps, one for positive integers $m$  and another one for
$m+1/2$. Also the explicit identity obtained in \cite{Ka} describing
the local smoothing effect in solutions of the KdV equation is not
available for the BO equation. In this case, to establish the local
smoothing one has to rely on several commutator estimates. The main
one is the extension of the Calder\'on first commutator estimate for
the Hilbert transform \cite{Ca} given by Bajvsank and Coifman  in
\cite{BaCo} (see  Theorem \ref{Cal} in section 2).

{\rm (d)}  Without loss of generality from now on we shall assume that in Theorem \ref{Th1} $x_0=0$.

{\rm (e)} We recall that the above result still hold if one replaces $x, t>0$ by  $x, t<0$.
\end{remark}

 The rest of this paper is organized as follows: section 2 includes the description of the two parameter family of cut-off functions
 to be  employed in the proof of Theorem \ref{Th1}. Also section 2 has the statements and some proofs of  the commutator estimates
 and the  interpolation inequalities  needed in the proof of Theorem \ref{Th1}. The proof of Theorem \ref{Th1} will be
 given in section 3.
 
\section{Preliminaries}

For each $\epsilon>0$ and $b\ge 5\epsilon$ we define a function
$\chi_{_{\epsilon, b}} \in C^{\infty}(\R)$ with  $\;\chi_{_{\epsilon, b}}'(x)\ge0$, and
\begin{equation}
\label{2.1}
\chi_{\epsilon,b}(x)=\begin{cases}
0,\hskip10pt x\le \epsilon,\\
1, \hskip 10pt x\ge b,
\end{cases}
\end{equation}
which will be constructed as follows. Let $\rho\in C^{\infty}_0(\R)$, $\rho(x)\geq 0$, even, with $\,\text{supp} \,\rho\subseteq(-1,1)$ and $\,\int\,\rho(x)dx=1$ and define
\begin{equation}
\label{2.2}
\nu_{_{\epsilon,b}}(x)=\begin{cases}
0,\hskip10pt x\le 2\epsilon,\\
\\
\frac{1}{b-3\epsilon} x-\frac{2\epsilon}{b-3\epsilon},\;\;\;\,x\in[2\epsilon,b-\epsilon],\\
\\
1, \hskip 10pt x\ge b-\epsilon,
\end{cases}
\end{equation}
and
\begin{equation}
\label{2.3}
\chi_{_{ \epsilon, b}}(x)=\rho_{\epsilon}\ast \nu_{\epsilon,b}(x)
\end{equation}
where $\rho_{\epsilon}(x)=\epsilon^{-1}\rho(x/\epsilon)$. Thus
\begin{equation}
\label{2.4}
\begin{split}
& \text{supp}\;\chi_{_{ \epsilon, b}}\subseteq [\epsilon,\infty),\\
& \text{supp}\; \chi_{_{ \epsilon, b}}'(x)\subseteq [\epsilon, b].
\end{split}
\end{equation}

If $\;x\in(3\epsilon, b-2\epsilon)$, then
\begin{equation}
\label{2.5}
\chi_{_{\epsilon, b}}'(x)\geq \frac{1}{b-3\epsilon}.
\end{equation}

 If $\;x\in (3\epsilon, \infty)$, then
\begin{equation}
\label{2.6}
\chi_{_{\epsilon, b}}(x)\geq \chi_{_{\epsilon, b}}(3\epsilon)\geq \frac{1}{2}\,\frac{\epsilon}{b-3\epsilon},
\end{equation}
and for any $\,x\in \R$
\begin{equation}
\label{2.7}
\chi_{_{\epsilon, b}}'(x) \leq \frac{1}{b-3\epsilon}.
\end{equation}

Now we define $\eta_{_{\epsilon, b}}$ by the identities
\begin{equation}
\label{2.8}
\chi_{_{\epsilon, b}}'(x)=\eta_{_{\epsilon, b}}^2, \;\;\;\;\hskip5pt{i.e.}\hskip5pt \;\;\;\;\;\eta_{_{\epsilon, b}}=\sqrt{\chi_{_{\epsilon, b}}'(x)}.
\end{equation}
\vskip.1in
\noindent\underline{CLAIM} : For any  $\epsilon>0$ and $b\ge 5\epsilon$ $\;\eta_{\epsilon, b}\in C^{\infty}_0(\R)$ with $\text{supp}\; \eta_{\epsilon, b} =\text{supp}\;\chi'_{\epsilon, b}$.
\vskip.1in
It suffices to show that if $f\in C_0^{\infty}(\R)$ with $\text{supp}\;f=[0,1]$ and $f(x)>0$, $x\in(0,1)$ then $\sqrt{f}$ is smooth at $x=0$ and $x=1$.
Without loss of generality we only consider the case $x=0$.

By hypothesis on $f$  for every $n\in \Z^{+}$ there exist $M_n\ge 0$ and $\delta_n>0$ such that
\begin{equation}\label{ast1}
|f(x)|\le M_n\,|x|^n, \hskip10pt |x|\le \delta_n.
\end{equation}
Thus
\begin{equation*}
\frac{d}{dx}\big(\sqrt{f}\big)(0)=\underset{h\to 0}{\lim}\, \frac{\sqrt{f(h)}-0}{h}=0
\end{equation*}
by \eqref{ast1}. Also
\begin{equation*}
\frac{d^2}{dx^2}\big(\sqrt{f}\big)(0)=\underset{h\to 0}{\lim}\, \frac{\sqrt{f(h)}-2\sqrt{f(0)}+\sqrt{f(-h)}}{h}=0
\end{equation*}
by \eqref{ast1}. Following this argument of writing the derivatives of $\sqrt{f}$ using finite differences, from \eqref{ast1} one obtains  the desired result, i.e.
\begin{equation*}
\frac{d^n}{dx^n}\big(\sqrt{f}\big)(0)= 0 \hskip10pt \text{for any} \hskip5pt n,
\end{equation*}
so $\,\sqrt{f}\,$ is smooth.
\vskip.2in
We shall also use that given $\epsilon>0$, $b\ge 5\epsilon\,$ there exists $c=c_{\epsilon, b}>0$ such that
\begin{equation}\label{CL}
\begin{split}
& \chi_{_{\epsilon/5,\epsilon}}(x)=1,\;\;\;\;\;\text{ on supp } \,\chi_{_{\epsilon,b}},\\
& \chi_{_{\epsilon,b}}'(x) \leq c\, \chi_{_{\epsilon/3,b+2\epsilon/3}}'(x)\, \chi_{_{ \epsilon/3, b+2\epsilon/3}}(x),\\
&\chi_{_{\epsilon,b}}'(x)   \leq c\,\chi_{_{\epsilon/5,\epsilon}}(x).
\end{split}
\end{equation}

The following extension of the Calder\'on commutator theorem \cite{Ca} established by Bajvsank and Coifman in \cite{BaCo} will be a crucial ingredient in our proof of Theorem \ref{Th1}.

\begin{theorem}
\label{Cal}
Let $\hil$ be the Hilbert transform. For any $p\in (1,\infty)$ and any $\, l, m \in \Z^{+}$, $\; l+m\ge 1$ there exists
$c=c(p; l; m)>0$ such that
\begin{equation}\label{CE}
\|\partial_x^l [\hil; \psi]\,\partial_x^m f\|_p\le c\,\|\partial_x^{m+l}\psi\|_{\infty} \|f\|_p.
\end{equation}
\end{theorem}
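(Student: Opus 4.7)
\emph{Proposal.} I would induct on $N := l+m \ge 1$. For the base case $N=1$, the estimate $\|[\hil;\psi]\partial_x f\|_p \le c\|\partial_x\psi\|_\infty \|f\|_p$ is precisely Calder\'on's first commutator theorem \cite{Ca}, and the companion estimate $\|\partial_x[\hil;\psi]f\|_p \le c\|\partial_x\psi\|_\infty\|f\|_p$ follows from it via the identity $\partial_x[\hil;\psi]f = [\hil;\partial_x\psi]f + [\hil;\psi]\partial_x f$ (a consequence of $\partial_x\hil = \hil\partial_x$), whose second summand is Calder\'on and whose first term $[\hil;\partial_x\psi]f$ is bounded by the Riesz $L^p$-boundedness of $\hil$.

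\emph{Algebraic reduction.} For the inductive step, commuting $\partial_x^l$ past $\hil$ and applying Leibniz to each of the two outer $\partial_x^l$'s produces the key identity
\begin{equation*}
\partial_x^l\,[\hil;\psi]\,\partial_x^m f \;=\; \sum_{j=0}^{l}\binom{l}{j}\bigl[\hil;\,\partial_x^j\psi\bigr]\,\partial_x^{l+m-j}f.
\end{equation*}
Setting $\phi=\partial_x^j\psi$ and $k=l+m-j$, one has $\partial_x^k\phi=\partial_x^{l+m}\psi$, so the theorem is equivalent to the reduced $l=0$ statement
\begin{equation*}
\|[\hil;\phi]\partial_x^k f\|_p \le c\,\|\partial_x^k\phi\|_\infty\|f\|_p, \qquad k\ge 0,\;1<p<\infty. \qquad (*)
\end{equation*}
Applying the same identity in the special case $(l,m)=(k,0)$ gives
\begin{equation*}
[\hil;\phi]\partial_x^k f = \partial_x^k[\hil;\phi]f - \sum_{j=1}^{k}\binom{k}{j}[\hil;\partial_x^j\phi]\partial_x^{k-j}f,
\end{equation*}
and each term of the sum has strictly lower derivative order $k-j<k$, hence is controlled by $c\|\partial_x^k\phi\|_\infty\|f\|_p$ by the inductive hypothesis (the $j=k$ term being handled by the Riesz theorem alone). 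Thus $(*)$ at a given $k$ is equivalent, modulo already-bounded lower-order terms, to the same estimate for the \emph{frontier} operator $\partial_x^k[\hil;\phi]$.

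\emph{Kernel analysis of the frontier, and main obstacle.} Using the Hadamard representation $\frac{\phi(x)-\phi(y)}{x-y}=\int_0^1 \phi'(y+t(x-y))\,dt$ and subtracting the Taylor polynomial $P_k(y,x)=\sum_{r=0}^k \frac{\partial_x^r\phi(y)}{r!}(x-y)^r$, a short computation shows that $\partial_x^k[\hil;\phi]$ is represented by a Calder\'on-Zygmund kernel proportional to
\begin{equation*}
\frac{\phi(x)-P_k(y,x)}{(x-y)^{k+1}},
\end{equation*}
of size $\lesssim \|\partial_x^k\phi\|_\infty/|x-y|$ and with analogous bounds on its first $y$-derivative. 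Obtaining the $L^p$ boundedness of this operator with constant linear in $\|\partial_x^k\phi\|_\infty$ is the substantive step, and the main obstacle of the whole argument: for $k\ge 2$ it is strictly deeper than the $k=1$ theorem of \cite{Ca} and cannot be extracted from the algebraic reductions above; it requires genuine Calder\'on-Zygmund analysis of the higher commutator kernel — either through a paraproduct decomposition of $\phi$ in the style of Coifman-Meyer, or through Calder\'on's rotation method realizing the operator as an average of Cauchy integrals on Lipschitz graphs. This is precisely the content of \cite{BaCo}, and at this point of the proof I would invoke it as a black box rather than reprove it.
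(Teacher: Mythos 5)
The paper does not prove Theorem~\ref{Cal} at all: it states it as a known result of Bajvsank and Coifman, cites \cite{BaCo}, and remarks that an alternative proof is Lemma~3.1 of \cite{DaMcPo}. Your proposal ends by invoking \cite{BaCo} as a black box for the core Calder\'on--Zygmund estimate, so at the decisive step you are doing exactly what the paper does; the difference is that you prepend a layer of algebraic reductions.

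Those reductions are correct. The operator identity $\partial_x[\hil;\psi]=[\hil;\partial_x\psi]+[\hil;\psi]\partial_x$ (which uses $\partial_x\hil=\hil\partial_x$) does iterate to
\begin{equation*}
\partial_x^l\,[\hil;\psi]\,\partial_x^m f=\sum_{j=0}^{l}\binom{l}{j}\bigl[\hil;\partial_x^j\psi\bigr]\partial_x^{l+m-j}f,
\end{equation*}
and setting $\phi=\partial_x^j\psi$, $k=l+m-j$ correctly reduces the theorem to the $l=0$ form $(\ast)$; the further rearrangement then shows that $(\ast)$ at level $k$ is, modulo inductively controlled lower-order commutators and the trivial $L^p$-boundedness of $\hil$, equivalent to bounding the ``frontier'' $\partial_x^k[\hil;\phi]$. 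However, since \cite{BaCo} already establishes the estimate for general $(l,m)$, this reduction buys you nothing beyond showing that all the operators $\partial_x^l[\hil;\psi]\partial_x^m$ with $l+m$ fixed are equivalent; you could just as well cite \cite{BaCo} directly, as the paper does.

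One small imprecision in the kernel computation: for $\partial_x^k[\hil;\phi]$ the Leibniz expansion produces, up to sign and a factor $k!$, the kernel
\begin{equation*}
\frac{\phi(y)-\sum_{r=0}^k\frac{\phi^{(r)}(x)}{r!}(y-x)^r}{(x-y)^{k+1}},
\end{equation*}
i.e.\ the Taylor polynomial should be centered at $x$, the variable you differentiate in. The expression you wrote, with the polynomial centered at $y$, is what arises from $[\hil;\phi]\partial_x^k$ after $k$ integrations by parts in $y$. Both kernels have the same size and regularity bounds $\lesssim\|\partial_x^k\phi\|_{\infty}/|x-y|$, so nothing downstream is affected, but it is worth keeping the two forms straight. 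Your closing assessment --- that the $L^p$ boundedness of this higher commutator kernel with a constant linear in $\|\partial_x^k\phi\|_{\infty}$ is genuine Calder\'on--Zygmund/Coifman--Meyer content, not extractable from the algebra --- is accurate and is precisely why the paper treats the statement as an external input rather than proving it.
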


For a different proof of this estimate see Lemma 3.1 in \cite{DaMcPo}. Also we shall use the following commutator estimate:

\begin{proposition}
\begin{equation}\label{CE2}
\|[D_x^{1/2}; h]\partial_x f\|_2\le c\,\|\widehat{\partial_x h}\|_1\|D_x^{1/2}f\|_2.
\end{equation}
\end{proposition}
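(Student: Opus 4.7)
The plan is to work on the Fourier side and reduce the estimate to Young's convolution inequality. By Plancherel it suffices to bound $\|\widehat{[D_x^{1/2};h]\partial_x f}\|_2$. Writing products as convolutions of Fourier transforms (absorbing the normalization constant into $c$), one has
\begin{equation*}
\widehat{[D_x^{1/2}; h]\partial_x f}(\xi)=c\int \widehat{h}(\xi-\eta)\bigl(|\xi|^{1/2}-|\eta|^{1/2}\bigr)\,i\eta\,\widehat{f}(\eta)\,d\eta.
\end{equation*}

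The key pointwise observation is the symbol bound
\begin{equation*}
\bigl|\,|\xi|^{1/2}-|\eta|^{1/2}\bigr|\,|\eta|=\frac{\bigl||\xi|-|\eta|\bigr|\,|\eta|}{|\xi|^{1/2}+|\eta|^{1/2}}\le\frac{|\xi-\eta|\,|\eta|}{|\eta|^{1/2}}=|\xi-\eta|\,|\eta|^{1/2},
\end{equation*}
valid for $\eta\neq 0$ and trivially for $\eta=0$ since the factor $\eta$ vanishes there. Inserting this into the integral above yields
\begin{equation*}
\bigl|\widehat{[D_x^{1/2}; h]\partial_x f}(\xi)\bigr|\le c\int |\xi-\eta|\,|\widehat{h}(\xi-\eta)|\cdot|\eta|^{1/2}|\widehat{f}(\eta)|\,d\eta = c\,(G\ast F)(\xi),
\end{equation*}
where $G(\zeta):=|\zeta|\,|\widehat{h}(\zeta)|$ and $F(\eta):=|\eta|^{1/2}|\widehat{f}(\eta)|$.

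Now I would close the estimate via Young's inequality $\|G\ast F\|_2\le\|G\|_1\|F\|_2$, noting that $\|G\|_1=\int|\zeta||\widehat{h}(\zeta)|\,d\zeta=\|\widehat{\partial_x h}\|_1$ and, by Plancherel, $\|F\|_2=\|D_x^{1/2}f\|_2$. Combining with Plancherel on the left gives \eqref{CE2}. The only mildly delicate step is the symbol bound, and that reduces to the elementary identity $a^{1/2}-b^{1/2}=(a-b)/(a^{1/2}+b^{1/2})$ for $a,b\ge 0$, together with the triangle inequality $\bigl||\xi|-|\eta|\bigr|\le |\xi-\eta|$; I do not anticipate a genuine obstacle here, since the assumption $\widehat{\partial_x h}\in L^1$ is exactly what is needed to control the convolution kernel in $L^1$ after the half-derivative symbol has been distributed via the pointwise bound.
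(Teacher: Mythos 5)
Your proof is correct and follows the same route as the paper: pass to the Fourier side, establish the pointwise symbol bound $\bigl||\xi|^{1/2}-|\eta|^{1/2}\bigr|\,|\eta|\le c\,|\xi-\eta|\,|\eta|^{1/2}$, and then close via Plancherel and Young's convolution inequality. The only cosmetic difference is that the paper invokes the mean value theorem for the pointwise bound, whereas you derive it (with the sharp constant $c=1$) from the algebraic identity $a^{1/2}-b^{1/2}=(a-b)/(a^{1/2}+b^{1/2})$ together with the reverse triangle inequality, which is a cleaner justification of the same estimate.
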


\begin{proof}

 Taking Fourier transform it follows that
\begin{equation}
\label{11}
\widehat{[D^{1/2}; h]\partial_x f}(\xi)=c\int (|\xi|^{1/2}-|\eta|^{1/2})\,\eta \,\widehat{h}(\xi-\eta)\,\widehat{f}(\eta)\,d\eta.
\end{equation}

Using the mean value theorem it is easy to see that there exists $c>0$ such that for any $\xi,\,\eta\in \R$
\begin{equation}
\label{12}
|\,|\xi|^{1/2}-|\eta|^{1/2}|\,|\eta|\leq c\,|\eta|^{1/2}\,|\xi-\eta|.
\end{equation}

Therefore, inserting \eqref{12} into \eqref{11} and using Plancherel identity and Young's inequality we obtain the desired result \eqref{CE2}.

\end{proof}

Next, we collect some inequalities concerning the Leibniz rule for fractional derivatives established in \cite{KP}, \cite{KPV}, \cite{GO}
to obtain :

\begin{lemma}  For $\alpha\in (0,1)$, $p\in [1, \infty)$
\begin{equation}\label{FD}
\|D_x^{\alpha}(fg)\|_p \le c\,\big( \|f\|_{p_1}\|D_x^{\alpha}g\|_{p_2}+\|D_x^{\alpha}f\|_{p_3} \|g\|_{p_4}\big)
\end{equation}
with
\begin{equation*}
\frac{1}{p}=\frac{1}{p_1}+\frac{1}{p_2}=\frac{1}{p_3}+\frac{1}{p_4}, \hskip10pt p_j\in(1,\infty],\;\; j=1, 2, 3, 4.
\end{equation*}
\end{lemma}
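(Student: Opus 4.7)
The plan is to prove this fractional Leibniz rule via Bony's paraproduct decomposition combined with Littlewood-Paley theory, following the strategy of Kato-Ponce \cite{KP}, refined by Kenig-Ponce-Vega \cite{KPV} and extended to the $L^\infty$ endpoints by Grafakos-Oru \cite{GO}. Fix standard homogeneous Littlewood-Paley projectors $\Delta_j$, low-frequency cut-offs $S_j=\sum_{k\le j-1}\Delta_k$, and fattened projectors $\widetilde{\Delta}_j=\Delta_{j-1}+\Delta_j+\Delta_{j+1}$. The first step is Bony's decomposition,
\[
fg \,=\, \sum_{j}(S_{j-3}f)(\Delta_j g) \,+\, \sum_{j}(\Delta_j f)(S_{j-3}g) \,+\, \sum_{j}(\Delta_j f)(\widetilde{\Delta}_j g) \,=:\, \Pi_1+\Pi_2+R,
\]
in which each summand of $\Pi_1,\Pi_2$ has spectrum in an annulus $|\xi|\sim 2^j$, while each summand of $R$ is only supported in a ball $|\xi|\lesssim 2^j$.

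For $\Pi_1$ the annular localization means $D_x^\alpha$ acts essentially on the high-frequency factor, so modulo an order-zero Coifman-Meyer multiplier $D_x^\alpha \Pi_1 \sim \sum_j (S_{j-3}f)\,(D_x^\alpha \Delta_j g)$. The Littlewood-Paley square-function characterization of $L^p$ together with the Fefferman-Stein vector-valued maximal inequality then gives $\|D_x^\alpha \Pi_1\|_p \lesssim \|f\|_{p_1}\|D_x^\alpha g\|_{p_2}$ for $p_1,p_2\in(1,\infty]$; by symmetry $\|D_x^\alpha \Pi_2\|_p \lesssim \|D_x^\alpha f\|_{p_3}\|g\|_{p_4}$. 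The $p_j=\infty$ endpoint in the $\Pi_2$ analysis is the place where the sharper \cite{GO} argument is needed, since one factor must then be estimated directly in $L^\infty$ rather than through the square function.

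The \textbf{main obstacle} is the diagonal remainder $R$: each summand is only ball-localized, so $D_x^\alpha$ cannot be pushed cleanly onto either side. I would address this by relocating $D_x^\alpha$ onto $\Delta_j f$ via a Coifman-Meyer multilinear multiplier estimate and then bounding the output by $\bigl\|\bigl(\sum_j |D_x^\alpha \Delta_j f|^2\,|\widetilde{\Delta}_j g|^2\bigr)^{1/2}\bigr\|_p$; the square function for $D_x^\alpha f$ together with pointwise majorization of $\widetilde{\Delta}_j g$ by the Hardy-Littlewood maximal function of $g$ yields $\|R\|_p \lesssim \|D_x^\alpha f\|_{p_3}\|g\|_{p_4}$, and the symmetric relocation produces the mixed $\|f\|_{p_1}\|D_x^\alpha g\|_{p_2}$ term. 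Since the stated inequality merely collects conclusions already established in \cite{KP}, \cite{KPV}, \cite{GO} over the full parameter range $\alpha\in(0,1)$, $p\in[1,\infty)$, $p_j\in(1,\infty]$, a formal write-up would reduce to quoting these references; the sketch above records the common architecture, with Coifman-Meyer multilinear multiplier theory carrying the real technical weight.
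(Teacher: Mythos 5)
The paper does not prove this lemma at all: it simply records the inequality and cites \cite{KP}, \cite{KPV}, \cite{GO}, which is exactly what you conclude one should do. Your paraproduct sketch (Bony decomposition, annular localization of $\Pi_1,\Pi_2$ handled by Littlewood--Paley square functions plus Fefferman--Stein, and a Coifman--Meyer relocation of $D_x^\alpha$ to treat the ball-localized remainder $R$) is the standard architecture behind those references and is consistent with what the paper implicitly relies on, so your approach and the paper's are essentially the same. Two small remarks: the reference the paper labels \cite{GO} is Grafakos and \emph{Oh}, not Grafakos--Oru, and it is indeed the source needed both for the $p_j=\infty$ endpoints and for the endpoint $p=1$ (where the plain square-function characterization of $L^p$ breaks down and one needs the sharper analysis of that paper), a point your sketch gestures at but does not spell out.
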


Finally, we state some interpolation inequalities to be used in the
proof of Theorem \ref{Th1}.
\begin{lemma}

\begin{equation}\label{INT}
\begin{split}
\|f\|_4 &\le c\|D_x^{1/4}f\|_2\le c \|D_x^{1/2}f\|_2^{1/2}\|f\|_2^{1/2},\\
\|D_x^{1/2}f\|_4 &\le c\|\partial_x f\|_4^{1/2}\|f\|_4^{1/2},\\
\|D_x^{1/2}f\|_4 &\le c \|D_x^{3/4}f\|_2\le c \|\partial_x f\|_2^{3/4}\|f\|_2^{1/4}.
\end{split}
\end{equation}
\end{lemma}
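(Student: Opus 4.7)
The plan is to dispatch the three inequalities separately, grouping them by technique: a Sobolev embedding into $L^4$ is needed twice, then two Fourier-side convexity bounds, and finally a fractional Gagliardo-Nirenberg estimate in $L^4$ that does not reduce to Plancherel.

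For the embeddings $\|f\|_4 \le c\|D_x^{1/4}f\|_2$ and $\|D_x^{1/2}f\|_4 \le c\|D_x^{3/4}f\|_2$, I would apply a single Sobolev embedding $\dot H^{1/4}(\R) \hookrightarrow L^4(\R)$, first to $f$ and then to $D_x^{1/2}f$. The standard route is to write $g = I_{1/4}(D_x^{1/4}g)$, where $I_{\alpha}$ denotes the Riesz potential with Fourier symbol $|\xi|^{-\alpha}$, and invoke the Hardy-Littlewood-Sobolev inequality with the parameters $(2,4)$; the scale $1/4 = 1/2 - 1/4$ is consistent.

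For the Fourier-side convexity bounds $\|D_x^{1/4}f\|_2 \le c\|D_x^{1/2}f\|_2^{1/2}\|f\|_2^{1/2}$ and $\|D_x^{3/4}f\|_2 \le c\|\partial_x f\|_2^{3/4}\|f\|_2^{1/4}$, Plancherel rewrites the squared left-hand sides as the weighted integrals $\int |\xi|^{1/2}|\widehat f(\xi)|^2\,d\xi$ and $\int |\xi|^{3/2}|\widehat f(\xi)|^2\,d\xi$. Cauchy-Schwarz on the first (splitting the weight as $|\xi|^{1/2}\cdot 1$) and H\"older with exponents $(4/3,4)$ on the second (splitting $|\xi|^{3/2} = (|\xi|^2)^{3/4}\cdot 1^{1/4}$) give the claimed estimates after taking square roots.

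The most delicate inequality is $\|D_x^{1/2}f\|_4 \le c\|\partial_x f\|_4^{1/2}\|f\|_4^{1/2}$, because Plancherel is unavailable in $L^4$. My plan is to reduce it to the fractional calculus already in play in the paper: combine the fractional Leibniz inequality \eqref{FD} with the Kenig-Ponce-Vega and Grafakos-Oh machinery cited there to extract the corresponding fractional Gagliardo-Nirenberg bound in $L^4$. A concrete alternative is to perform a Littlewood-Paley decomposition $f=\sum_j P_j f$, apply Bernstein's inequality $\|D_x^{1/2}P_j f\|_4 \lesssim 2^{j/2}\|P_j f\|_4$ at each dyadic frequency, and reassemble using the square-function characterization of $L^4$. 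Verifying that the constants are uniform and that the frequency sums converge is the main obstacle, and the cleanest shortcut is to cite the already-invoked fractional calculus references rather than redo the Littlewood-Paley computation from scratch.
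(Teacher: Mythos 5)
Your argument is correct but takes a genuinely different route from the paper, which simply refers the reader to Bergh--L\"ofstr\"om for all three lines as instances of (complex) interpolation of (homogeneous) Sobolev and Besov spaces. You instead give a hands-on proof: the two Sobolev embeddings in lines 1 and 3 both reduce to $\dot H^{1/4}(\R)\hookrightarrow L^4(\R)$, which you correctly derive from Hardy--Littlewood--Sobolev for the Riesz potential $I_{1/4}=D_x^{-1/4}$ with the exponent count $\tfrac14=\tfrac12-\tfrac14$; and the two right-hand inequalities in lines 1 and 3 follow from Plancherel plus Cauchy--Schwarz, resp.\ H\"older with $(4/3,4)$, on the Fourier side --- all elementary and correct. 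The one place you should be more careful is line 2: Bernstein on each dyadic block gives $\|D_x^{1/2}P_jf\|_4\lesssim 2^{j/2}\|P_jf\|_4$, but you cannot reassemble $L^4$ norms block by block (the $\ell^4_j$ sum of $\|P_jf\|_4$ is not $\|f\|_4$). What actually works is the pointwise bound $|D_x^{1/2}P_jf|\lesssim 2^{j/2}M(P_jf)$ with $M$ the Hardy--Littlewood maximal function, then Cauchy--Schwarz in $j$ to split $2^j=2^{2j\cdot\frac12}\cdot 1^{\frac12}$ inside the square function, H\"older in $x$, and finally the Fefferman--Stein vector-valued maximal inequality to replace $M(P_jf)$ by $P_jf$ on both factors. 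That ingredient (or the equivalent complex-interpolation statement $[\dot W^{0,4},\dot W^{1,4}]_{1/2}=\dot W^{1/2,4}$, which is exactly what the cited monograph packages) is needed and should be named; with it, your route is a complete, self-contained alternative to the paper's citation, at the modest cost of invoking Littlewood--Paley theory explicitly.
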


For the proof of  \ref{INT} we refer to \cite{BeLo}.

\section{Proof of Theorem A}

We shall use induction in $m\in \Z^{+}$ with $\,m\ge 2$. First we consider the case $m=2$.
\vskip3mm

\noindent\underline{Case $m=2$ in \eqref{bo3} }: Taking second derivative in the equation \eqref{bo1} multiplying the result by $\p_x^2u(x,t)\chi_{_{\epsilon,b}}(x+vt)$ and
integrating in the $x$-variable one gets the identity (omitting the indices $\epsilon, b$ on $\chi$)
\begin{equation}\label{bo8}
\begin{split}
&\frac12 \frac{d}{dt} \int (\p_x^2u)^2\,\chi\,dx-\underset{A_1}{\underbrace{v\int(\p_x^2u)^2\,\chi'\, dx}}\\
&-\underset{A_2}{ \underbrace{\int \hil \p_x^4u\, \p_x^2u\,\chi\,dx}}
+\underset{A_3}{\underbrace{\int \p_x^2(u\p_x u)\,\p_x^2 u \,\chi\, dx}}=0.
\end{split}
\end{equation}

Since given $T>0$, $\epsilon>0$, $b>5\epsilon$, $v>0$, there exist $c>0$ and $R>0$ such that
\begin{equation}\label{bo9}
\chi_{_{\epsilon, b}}'(x+vt)\le c\, 1_{[-R, R]}(x) \hskip10pt \text{for all \hskip5pt} (x,t) \in \R\times [0,T],
\end{equation}
using \eqref{bo2} (iii) one has after integration in the time interval $[0,T]$ that
\begin{equation}\label{bo10}
\begin{split}
\int\limits_0^T &|A_1(t)|\,dt \\
&\le c\,|v|\int\limits_{-T}^{T}\int\limits_{-R}^{R} (\p_x^2u)^2(x,t)\,dxdt
\le c=c(v, R, T, \epsilon, b, \|u_0\|_{3/2,2}).
\end{split}
\end{equation}

To control the contribution of $A_2$ in \eqref{bo8} we write after integration by parts that
\begin{equation}\label{bo11}
A_2(t)=\int \hil \p_x^3u \,\p_x^3u\,\chi \,dx +\int \hil \p_x^3u \,\p_x^2u\,\chi' \,dx= A_{21}(t)+ A_{22}(t).
\end{equation}

Now since the Hilbert transform is skew symmetric
\begin{equation}\label{bo12}
\begin{split}
A_{21}&= -\int \p_x^3 u\,\hil(\p_x^3u\,\chi) \,dx\\
&=-\int \p_x^3 u \,\hil\p_x^3u\,\chi\,dx-\int \; \partial_x^3u\, [\hil; \chi]\,\p_x^3u\,dx\\
&=-A_{21} -\int \p_x^3 u\,[\hil; \chi]\,\p_x^3 u\,dx.
\end{split}
\end{equation}
Therefore
\begin{equation}\label{bo13}
A_{21}= -\frac12\int \p_x^3 u\,[\hil; \chi]\,\p_x^3 u\,dx=\frac12\int u\,\p_x^3 [\hil; \chi]\,\p_x^3u\,dx
\end{equation}
and consequently using the commutator estimate \eqref{CE} it follows that
\begin{equation}\label{bo14}
\underset{0\le t\le T}{\sup} |A_{21}(t)| \le c\,\|u(t)\|_2^2=c\,\|u_0\|_2^2.
\end{equation}

Next using that $\eta^2=\chi'$ we rewrite $A_{22}$ in \eqref{bo11} as
\begin{equation}\label{bo15}
\begin{split}
A_{22}&=\int \hil\p_x^3u\,\eta\,\p_x^2u\,\eta\,dx\\
&=\int \hil(\p_x^3u\,\eta)\,\p_x^2u\,\eta\,dx-\int[\hil; \eta]\,\p_x^3u\,\p_x^2u\,\eta\,dx\\
&=\int \hil\p_x(\p_x^2u\,\eta)\,\p_x^2u\,\eta\,dx-\int\hil(\p_x^2u\,\eta')\,\p_x^2u\,\eta\,dx\\
&\hskip10pt -\int [\hil; \eta]\p_x^3u\,\p_x^2u\,\eta\,dx\\
&=\int D_x^{1/2}(\p_x^2u\,\eta)\,D_x^{1/2}(\p_x^2u\,\eta)\,dx-\int\hil(\p_x^2u\,\eta')\,\p_x^2u\,\eta\,dx\\
&\hskip10pt -\int [\hil; \eta]\p_x^3u\,\p_x^2u\,\eta\,dx
= A_{221} + A_{222} + A_{223}.
\end{split}
\end{equation}

To estimate $\,A_{222}$ in \eqref{bo15} we write
\begin{equation}\label{bo16}
\begin{split}
\int\limits_0^T |A_{222}(t)|\,dt &\le \int\limits_0^T \|\p_x^2u\,\eta'\|_2 \|\p_x^2u\,\eta\|_2\,dt\\
&\le  \int\limits_0^T\int  ( \p_x^2u\,\eta')^2\,dxdt +\int\limits_0^T\int  ( \p_x^2u\,\eta)^2\,dxdt\\
&\le c = c (v, R, T, \epsilon, b, \|u_0\|_{3/2,2}),
\end{split}
\end{equation}
by \eqref{bo2} (iii).

Now, after integration by parts we have that
\begin{equation}\label{bo17a}
A_{223}(t)=\int\,\partial_x[\hil;\eta]\partial_x^3u\,\partial_xu\eta dx+\int [\hil;\eta]\partial_x^3u\,\partial_xu\eta' dx.
\end{equation}

Using the commutator estimate\eqref{CE}, \eqref{bo2} (iii) and \eqref{bo9}, after integrating in the time interval $[0,T]$ one finds that
\begin{equation}\label{bo17}
\begin{split}
\int\limits_0^T |A_{223}(t)&|\,dt \le c\,\int\limits_0^T\|u(t)\|_2( \|\p_x u\,\eta\|_2+\|\partial_x u\eta'\|_2)\,dt\\
&\le c T\underset{0\le t \le T} {\sup} \|u(t)\|_2^2+ c\int\limits_0^T\int (|\p_x u\eta|^2+|\partial_xu\eta'|^2)dxdt\\
&\le c\,T \|u_0\|_2^2 +  c (v, R, T, \epsilon, b, \|u_0\|_{3/2,2}).
\end{split}
\end{equation}

Next,  we observe that $A_{221}(t)$ in \eqref{bo15} is positive and represents the smoothing effect.

Finally we consider $A_3$ in \eqref{bo8}. So one has that
\begin{equation}\label{bo18}
\begin{split}
A_3(t) &= \int u\,\p_x^3u \p_x^2u \,\chi\,dx  +3  \int \p_xu\,\p_x^2u \p_x^2u \,\chi\,dx \\
&= -\frac12  \int u\,\p_x^2u \p_x^2u \,\chi'\,dx +\frac52  \int \p_x u\p_x^2u \p_x^2u \,\chi\,dx \\
&= A_{31}(t)+ A_{32}(t).
\end{split}
\end{equation}

Then
\begin{equation}\label{bo19}
|A_{31}(t)| \le \|u(t)\|_{\infty}\int (\p_x^2u)^2\,\chi'\,dx,
\end{equation}
so after integration in the time interval $[0,T]$ a combination of the Sobolev embedding, \eqref{bo2} (i) and \eqref{bo2} (iii) yields
\begin{equation}\label{bo20}
\begin{split}
\int\limits_0^T |A_{31}(t)|\,dt&\le \underset{0\le t\le T}{\sup}\|u(t)\|_{1,2} \int\limits_0^T\int  (\p_x^2u)^2\,\chi'\,dxdt\\
&\le c (v, R, T, \epsilon, b, \|u_0\|_{3/2,2}).
\end{split}
\end{equation}
Also one gets that
\begin{equation}\label{bo21}
|A_{32}(t)| \le \|\p_x u(t)\|_{\infty} \int (\p_x^2u)^2(x,t)\, \chi_{_{\epsilon, b}}(x+vt)\, dx.
\end{equation}

Inserting the above information in \eqref{bo8}, using Gronwall's inequality and \eqref{bo2} (ii) we obtain \eqref{bo4}
with $m=2$ which is the desired result.

\vskip3mm

\noindent\underline{Case $m=2\;$ in \eqref{bo5}} :  Assuming that \eqref{bo4} holds for $m=2$ and that  \eqref{bo5} holds for any $\epsilon>0$ and $b\geq 5\epsilon$ with $x_0=0$ and $m=2$ we shall prove \eqref{bo6}.

From the previous step, $m=2$,  we know that the solution satisfies that for any $\epsilon>0$, $b> 5\epsilon$, $v>0$ and $T>0$
\begin{equation}\label{bo22}
\begin{split}
&\underset{0\le t\le T}{\sup} \int (\p_x^2 u(x,t))^2\,\chi_{_{\epsilon, b}}(x+vt)\,dx\\
&\hskip15pt+ \int\limits_0^T\int (D_x^{1/2}(\p_x^2u(x,t)\,\eta_{_{\epsilon, b}}(x+vt))^2\,dxdt \le c = c(T,\epsilon, b, v).
\end{split}
\end{equation}
where $\;\chi_{_{\epsilon, b}}'(x) = \eta_{_{\epsilon, b}}^2(x)$.

From the equation \eqref{bo1} one gets that
\begin{equation}\label{bo23}
\p_t(\p_x^2 u\, \chi_{_{\epsilon, b}})-v \p_x^2u\, \chi_{_{\epsilon, b}}' -\p_x^2\hil\p_x^2u\, \chi_{_{\epsilon, b}}+ \p_x^2(u\p_xu)\,\chi_{_{\epsilon, b}}=0.
\end{equation}

Therefore applying $D^{1/2}_x$ to \eqref{bo23}, multiplying the result by $D^{1/2}_x(\p_x^2u\,\chi_{_{\epsilon, b}})$ and integrating in the space variable
it leads to the identity (omitting the indices $\epsilon$ and $b$ on $\chi$)
\begin{equation}\label{bo24}
\begin{split}
&\frac12 \frac{d}{dt} \int (D^{1/2}_x(\p_x^2 u\, \chi))^2\,dx -
\underset{A_1}{\underbrace{v\int D^{1/2}_x(\p_x^2u\, \chi')\,D^{1/2}_x(\p_x^2u\, \chi)\,dx}} \\
&-\underset{A_2}{\underbrace{\int D^{1/2}_x(\hil\p_x^4u\, \chi)\,D^{1/2}_x(\p_x^2u\, \chi)\,dx}}\\
&+\underset{A_3}{\underbrace{\int D^{1/2}_x( \p_x^2(u\p_xu)\,\chi)\,D^{1/2}_x(\p_x^2u\, \chi)\,dx}} = 0.
\end{split}
\end{equation}

First we have
\begin{equation}\label{bo25}
\begin{split}
|A_1(t)|&\le  |v|\int (D^{1/2}_x(\p_x^2u\,\chi'))^2\,dx+|v|\int  (D^{1/2}_x(\p_x^2u\,\chi))^2\,dx \\
&= A_{11}(t)+ A_{12}(t).
\end{split}
\end{equation}

Using the previous step \eqref{bo22}, \eqref{FD}, \eqref{INT} and Young's inequality (and recalling the notation $\,\eta^2=\chi'$)  we obtain
\begin{equation}\label{bo26}
\begin{split}
A_{11}(t) & = |v|\, \|D^{1/2}_x(\p_x^2 u\, \eta^2)\|_2^2\\
&\le |v| \, \|D^{1/2}_x(\p_x^2u\,\eta)\|_2^2 \|\eta\|_{\infty}^2+ |v| \|\p_x^2 u\,\eta \|_4^2 \|D_x^{1/2}\eta\|_4^2\\
&\le c\,|v| \,\|D^{1/2}_x(\p_x^2u\,\eta)\|_2^2 +c\,|v|\, \|D^{1/2}_x(\p_x^2 u\,\eta)\|_2\|\p_x^2 u\,\eta\|_2\\
&\le c\,|v|\, \|D^{1/2}_x(\p_x^2u\,\eta)\|_2^2 +c\,|v|\, \|\p_x^2 u\,\eta\|_2^2.
\end{split}
\end{equation}
So after integration in time interval the first term on the right hand side of \eqref{bo26} is bounded. To estimate
the second term on the right hand side  of \eqref{bo26} we use that
$$
\eta_{_{\epsilon, b}}^2=\chi_{_{\epsilon, b}}' \le c \,\chi _{_{\epsilon/5, \epsilon}}.
$$
 Thus we
have
\begin{equation*}
\underset{0\le t\le T}{\sup} \|\p_x^2 u\,\eta\|_2^2\le \underset{0\le t\le T}{\sup} \|\p_x^2 u\, \chi_{_{\epsilon/5, \epsilon}}\|_2^2 \le c
\end{equation*}
by \eqref{bo22} with $(\epsilon,b)=(\epsilon/5,\epsilon)$.

We observe that $A_{12}(t)$ is a multiple of the quantity we are estimating in \eqref{bo24}.

Next we consider $A_2(t)$ in \eqref{bo24}.
\begin{equation}\label{bo27}
\begin{split}
A_2(t) &= - \int D_x(\hil \p_x^4u\,\chi)\,\p_x^2u\,\chi\,dx\\
&=- \int D_x\hil (\p_x^4u\,\chi)\,\p_x^2u\,\chi,dx     +         \int D_x [\hil ; \chi]\,\p_x^4u\,\p_x^2u\,\chi\,dx\\
&= A_{21}(t)+ A_{22}(t).
\end{split}
\end{equation}

From the commutator estimate \eqref{CE} and the previous step \eqref{bo22} it follows that
\begin{equation}\label{bo28}
\begin{split}
|A_{22}(t)| & = \big | \int\hil \p_x [\hil; \chi] \p_x^4 u\, \p_x^2u \,\chi\,dx \big|\\
&\le c\,\|\chi^{(5)}\|_{\infty}\|u(t)\|_2\,\|\p_x^2u(\cdot,t)\,\chi(\cdot+vt)\|_2\\
&\le c\,\|u_0\|_2,
\end{split}
\end{equation}
with $c = c(T,\epsilon, b, v)$ and $t\in[0,T]$.

We turn our attention to $A_{21}(t)$ in \eqref{bo27}.
\begin{equation}\label{bo29}
\begin{split}
A_{21}(t) & = \int \p_x(\p_x^4u\,\chi)\,\p_x^2u\,\chi\,dx\\
& =- \int \p_x^4u\,\chi\,\p_x^3u\,\chi\,dx   -    \int \p_x^4u\,\chi\,\p_x^2u\,\chi'\,dx\\
&= 2 \int (\p_x^3u)^2\,\chi\,\chi'\,dx   -\frac12    \int (\p_x^2u)^2\,(\chi\,\chi')''\,dx\\
&= A_{211}(t) + A_{212}(t).
\end{split}
\end{equation}

From the previous step  \eqref{bo22} with $(\epsilon, b)=(\epsilon/10,\epsilon/2)$
one has that $A_{212}(t)$ is bounded and


\begin{equation}\label{31}
A_{211}(t)\ge 0
\end{equation}
which will provide the smoothing effect after being integrated in time.

So it remains to estimate $A_3=A_3(t)$ in \eqref{bo24},
\begin{equation}\label{bo32}
\begin{split}
A_3(t)&= \int D^{1/2}_x(u\,\p_x^3u\,\chi)\, D^{1/2}_x(\p_x^2u\,\chi)\,dx\\
&\hskip15pt+3\int D^{1/2}_x(\p_x u\,\p_x^2u\,\chi)\, D^{1/2}_x(\p_x^2u\,\chi)\,dx\\
&= A_{31}(t)+ A_{32}(t).
\end{split}
\end{equation}

Thus using \eqref{CL} and \eqref{FD} we have
\begin{equation}\label{bo33}
\begin{split}
|A_{32}(t)| &\le c \|D^{1/2}_x(\partial_xu\chi_{\epsilon/5,\epsilon}\partial_x^2u\chi)\|_2\|D^{1/2}_x(\partial_x^2u\chi)\|_2\\
&\leq c \|\partial_xu\|_{\infty}\|D^{1/2}_x(\partial_x^2u\chi)\|_2^2\\
&\hskip15pt+c\| D^{1/2}_x(\partial_xu\chi_{\epsilon/5,\epsilon})\|_4 \|\partial_x^2u\chi\|_4 \|D^{1/2}_x(\partial_x^2u\chi)\|_2\\
&= \,A_{321}(t) + A_{322}(t).
\end{split}
\end{equation}

The term $A_{321}(t)$ will be handled by Gronwall's inequality and \eqref{bo2} (ii). Now using \eqref{INT}
and Young's inequality one gets
\begin{equation}\label{bo34}
\begin{split}
A_{322}(t) &\le c\|\partial_x(\p_x u\,\chi_{_{\epsilon/5, \epsilon}})\|_2^{3/4}\|\p_xu\,\chi_{_{\epsilon/5, \epsilon}}\|_2^{1/4}\\
&\hskip15pt \times \|D^{1/2}_x(\partial_x^2u\chi)\|_2^{1/2} \|\partial_x^2u\chi\|_2^{1/2}  \|D^{1/2}_x(\partial_x^2u\chi)\|_2\\
&\le c \|\partial_x(\p_x u\,\chi_{_{\epsilon/5, \epsilon}})\|_2^3\|\partial_xu\chi_{\epsilon/5,\epsilon}\|_2\|\partial_x^2u\chi\|_2^2\\
&\hskip15pt + c\,\|D^{1/2}(\partial_x^2u\chi)\|_2^2,
\end{split}
\end{equation}
where the first term on the right hand side of \eqref{bo34} is bounded in time, by our previous step \eqref{bo22}, while the second term in the right hand side of \eqref{bo34} is the quantity to be estimated.

Now we consider $A_{31}(t)$ in \eqref{bo32}. Thus, by \eqref{CL}
\begin{equation}\label{bo35}
\begin{split}
A_{31}(t)&= \int D^{1/2}_x(u\partial_x^3u \chi) \, D^{1/2}_x(\partial_x^2u \chi)\,dx\\
&=c \int u\chi_{_{\epsilon/5,\epsilon}} \, D^{1/2}_x (\partial_x(\partial_x^2u \chi))\, D^{1/2}_x (\partial_x^2u \chi)\,dx\\
&\hskip15pt +c\int [D^{1/2}_x;  u\chi_{_{\epsilon/5,\epsilon}} ] \partial_x(\partial_x^2u \chi) D^{1/2}_x(\partial_x^2u \chi)\,dx\\
&\hskip15pt -c \int D^{1/2}_x(u\partial_x^2u \chi') D^{1/2}_x(\partial_x^2u \chi)\,dx\\
&= A_{311}(t)+A_{312}(t)+A_{313}(t).
\end{split}
\end{equation}

Integration by parts yields
\begin{equation}\label{bo36}
\begin{split}
|A_{311}(t)|&=\big|-\frac12 \int \partial_x(u\chi_{_{\epsilon/5,\epsilon}})\,(D^{1/2}_x (\partial_x^2u \chi))^2\,dx\big|\\
&\le c(\|\partial_xu\|_{\infty}+\|u\|_{\infty})  \int (D^{1/2}_x (\partial_x^2u\, \chi))^2\,dx.
\end{split}
\end{equation}
After applying the Sobolev inequality this term will be  handled by the Gronwall's inequality and \eqref{bo2} (ii).

 From the commutator estimate \eqref{CE2} and the Sobolev inequality one gets
\begin{equation}\label{bo39}
\begin{split}
|A_{312}(t)|&\le \|\widehat{\p_x(u\,\chi_{_{\epsilon/5, \epsilon}})}\|_1\|D^{1/2}_x(\p_x^2u\,\chi)\|_2^2\\
&\le \|\p_x(u\,\chi_{_{\epsilon/5, \epsilon}})\|_{1,2}\|D^{1/2}_x(\p_x^2u\,\chi)\|_2^2\\
&\le \big(\|\p_x(u\,\chi_{_{\epsilon/5, \epsilon}})\|_2+\|\p^2_x u\,\chi_{_{\epsilon/5, \epsilon}}\|_2\\
&\hskip15pt+\|\p_xu\,\chi_{_{\epsilon/5, \epsilon}}'\|_2+\|u\,\chi_{_{\epsilon/5, \epsilon}}''\|_2\big) \|D^{1/2}_x(\p_x^2u\,\chi)\|_2^2\\
&= B(t)\,\|D^{1/2}_x(\p_x^2u\,\chi)\|_2^2.
\end{split}
\end{equation}

Since by \eqref{bo2} (i) and \eqref{bo22} $B(t)$ is 
bounded in the time interval $[0,T]$ and
$\,\|D^{1/2}_x(\p_x^2u\,\chi)\|_2^2$ is the quantity we are estimating,  Gronwall's inequality provides the bound  for
$|A_{312}(t)|$.

Finally, we consider  $A_{313}(t)$ in \eqref{bo35}. Using \eqref{CL} and \eqref{CE2}  yields
\begin{equation}\label{39b}
\begin{split}
|A_{313}(t)|&\le c\|D^{1/2}_x(u\,\chi_{_{\epsilon/5, \epsilon}}\partial_x^2u\,\chi')\|_2\|D^{1/2}_x(\partial_x^2u\,\chi)\|_2\\
&\le c\,\|u\|_{\infty}\|D^{1/2}_x(\partial_x^2u\,\chi')\|_2\|D^{1/2}_x(\partial_x^2u\,\chi)\|_2\\
&\hskip10pt + \|D^{1/2}_x(u\,\chi_{_{\epsilon/5, \epsilon}})\|_4\|\partial_x^2u\,\chi'\|_4\|D^{1/2}_x(\partial_x^2u\,\chi)\|_2\\
&= A_{3131}(t)+A_{3132}(t).
\end{split}
\end{equation}

From an argument similar to the one applied in \eqref{bo34} and \eqref{CL} it follows that
\begin{equation}\label{39c}
\begin{split}
A_{3132} (t)&\le  c\,\|\partial_x(u\,\chi_{_{\epsilon/5, \epsilon}})\|_2^{3/4}\|u\,\chi_{_{\epsilon/5, \epsilon}}\|_2^{1/4}\\
&\hskip10pt\times \|D^{1/2}_x(\partial_x^2u\,\chi')\|_2^{1/2}\|\partial_x^2u\,\chi'\|_2^{1/2}\|D^{1/2}_x(\partial_x^2u\,\chi)\|_2\\
&\le c\|\partial_x(u\,\chi_{_{\epsilon/5, \epsilon}})\|_2^3\|u\,\chi_{_{\epsilon/5, \epsilon}}\|_2\|\partial_x^2u\,\chi_{_{\epsilon/5, \epsilon}}\|_2^2\\
&\hskip10pt+\|D^{1/2}_x(\partial_x^2u\,\chi')\|_2^2+ \|D^{1/2}_x(\partial_x^2u\,\chi)\|_2^2.
\end{split}
\end{equation}

The first term on the last inequality is bounded in time by \eqref{bo22} with $(\epsilon, b)= (\epsilon/5, \epsilon)$. The last term is the quantity we want to estimate
it will be handled using Gronwall's inequality. The second term can be estimated employing \eqref{FD}, \eqref{INT} and Young's inequality, that is,
\begin{equation}\label{bo38}
\begin{split}
\|D^{1/2}_x&(\p_x^2u\,\chi')\|^2_2 =\|D^{1/2}_x(\p_x^2u\,\eta\,\eta)\|_2^2\\
&\le c\|\eta\|_{\infty}^2\|D^{1/2}_x(\p_x^2u\,\eta)\|_2^2+c\|D^{1/2}_x\eta\|_4^2\|\p_x^2u\,\eta\|_4^2\\
&\le c\|D^{1/2}_x(\p_x^2u\,\eta)\|_2^2 +c \|\p_x^2u\,\eta\|_2\|D^{1/2}_x(\p_x^2u\,\eta)\|_2\\
&\le c\|D^{1/2}_x(\p_x^2u\,\eta)\|_2^2+c \|\p_x^2u\,\eta\|_2^2.
\end{split}
\end{equation}
From \eqref{bo22} with $(\epsilon/5, \epsilon)$ instead of $(\epsilon, b)$ the last term is bounded in time $t\in [0,T]$
and the previous one is bounded after integrating in time (see \eqref{bo22}).

Next we deal with the term $A_{3131} (t)$.  Young's inequality gives
\begin{equation}\label{bo39d}
A_{3131} (t)\le c\|u\|_{\infty}^2\|D^{1/2}_x(\partial_x^2u\,\chi)\|_2^2+ \|D^{1/2}_x(\partial_x^2u\,\chi')\|_2^2.
\end{equation}
The first term on the right hand side of \eqref{bo39d} can be handled  using Gronwall's inequality and \eqref{bo2} (i). The last term can be estimated using
the argument in \eqref{bo38}.

Collecting these results we obtain the desired estimate \eqref{bo6} with $m=2$.

\vskip3mm

Following the induction argument we  shall assume that \eqref{bo4} holds for $m\leq j\in\Z^+, \,j\geq 2$, and prove that if \eqref{bo3}  with $x_0=0$ and $m=j+1$ holds, then:

(a) \eqref{bo5} and \eqref{bo6} hold with $x_0=0\,$ and $\,m=j$,

and

(b) \eqref{bo4} holds with $\,x_0=0$ for $m=j+1$.
\vskip3mm
\noindent\underline{Part (a)} :   From the hypothesis \eqref{bo3} with $m=j+1$  and since $u_0\in H^{3/2^+}(\R)$ one gets \eqref{bo5} with $m=j$ by interpolation.

Next,  a familiar argument provides the identity
\begin{equation}\label{bo41}
\begin{split}
&\frac12 \frac{d}{dt} \int (D^{1/2}_x(\p_x^j u\, \chi))^2\,dx -
\underset{A_1}{\underbrace{v\int D^{1/2}_x(\p_x^ju\, \chi')\,D^{1/2}_x(\p_x^j u\, \chi)\,dx}} \\
&-\underset{A_2}{\underbrace{\int D^{1/2}_x(\hil\p_x^{2+j}u\, \chi)\,D^{1/2}_x(\p_x^ju\, \chi)\,dx}}\\
&+\underset{A_3}{\underbrace{\int D^{1/2}_x( \p_x^j(u\p_xu)\,\chi)\,D^{1/2}_x(\p_x^ju\, \chi)\,dx}} = 0.
\end{split}
\end{equation}

First we observe that
\begin{equation}\label{bo42}
\begin{split}
|A_1(t)|&\le |v|\big(\int (D^{1/2}_x(\p_x^ju\,\chi))^2\,dx +\int (D^{1/2}_x(\p_x^ju\,\chi'))^2\,dx \big)\\
&= A_{11}(t)+ A_{12}(t),
\end{split}
\end{equation}
where $A_{11}(t)$ is multiple of the quantity we are estimating in \eqref{bo41} and from \eqref{FD}--\eqref{INT}
we deduce
\begin{equation}\label{bo43}
\begin{split}
A_{12}(t) &= |v| \|D^{1/2}_x(\p_x^ju\, \eta\, \eta)\|_2^2\\
&\le |v|\Big(\|\eta\|_{\infty}^2\|D^{1/2}_x(\p_x^ju\, \eta)\|_2^2
+ \|D^{1/2}_x\eta\|_4^2\|\p_x^j u \, \eta\|_4^2\Big)\\
&\le c\|D_x^{1/2}(\p_x^ju\,\eta)\|^2_2+ c\,\|D^{1/2}_x(\p_x^ju\,\eta)\|_2\|\p_x^ju\,\eta\|_2\\
&\le c\,\|D_x^{1/2}(\p_x^ju\,\eta)\|^2_2+ c\|\p_x^ju\,\eta\|_2^2.
\end{split}
\end{equation}
By \eqref{bo4} with $m=j$ (induction hypothesis) the second term on the right hand side of \eqref{bo43} is bounded, while the first term
is bounded after integration in time.


Next we turn our attention to $A_2(t)$ in \eqref{bo41}
\begin{equation}\label{bo48}
\begin{split}
A_2(t) &= -\int D_x(\hil \p_x^{j+2}u\,\chi)\, \p_x^ju\, \chi\,dx\\
&= -\int D_x\hil(\p_x^{j+2}u\,\chi) \p_x^ju\,\chi\,dx+\int D_x[\hil; \chi] \p_x^{j+2}u\, \p_x^ju\,\chi\,dx\\
&= A_{21}(t) + A_{22}(t).
\end{split}
\end{equation}
where by \eqref{CE} and the conservation of the $L^2$-norm
\begin{equation}\label{bo49}
\begin{split}
|A_{22}(t)|&\le |\int \hil\p_x [\hil; \chi] \,\p_x^{j+2}u\,\p_x^ju \,\chi\,dx| \\
&\le c \|\chi^{(j+3)}\|_{\infty} \|u(t)\|_2 \|\p_x^ju\,\chi\|_2\\
&\le c\|u_0\|^2_2 +c \int (\p_x^ju)^2\,\chi_{_{\epsilon, b}}(x+vt)\,dx \le c
\end{split}
\end{equation}
using that $0\le \chi \le 1$ and the induction hypothesis \eqref{bo4} with $m=j$. Now
\begin{equation}\label{bo50}
\begin{split}
A_{21}(t) &= \int \p_x(\p_x^{j+2}u\,\chi)\, \p_x^ju \chi\,dx\\
&=-\int \p_x^{j+2}u\,\chi\,\p_x^{j+1}u\,\chi-\int \p_x^{j+2}u \,\chi \p_x^ju\,\chi'\,dx\\
&= 2\int (\p_x^{j+1}u)^2\,\chi\,\chi'\,dx+\int \p_x^{j+1}u\,\p_x^ju\,(\chi\,\chi')'\,dx\\
&= 2\, \int (\p_x^{j+1}u)^2\,\chi\,\chi'\,dx -\frac12 \int (\p_x^{j}u)^2\,(\chi\,\chi')''\,dx\\
&= A_{211}(t) + A_{212}(t).
\end{split}
\end{equation}

From the previous step (hypothesis of induction) $m=j$ with $(\epsilon/4, \epsilon)$ instead of $(\epsilon, b)$ one has that $A_{212}(t)$ is
bounded and 
\begin{equation}\label{bo52}
A_{211}(t)\ge 0,
\end{equation}
which yields  the smoothing effect after being integrated in time. So it remains only to consider $A_3$ in \eqref{bo41}.

\begin{equation}\label{bo53}
\begin{split}
A_3(t)&=  \int D^{1/2}_x(u\,\p_x^{j+1}u\, \chi_{_{\epsilon, b}})\, D^{1/2}_x(\p_x^ju\,\chi_{_{\epsilon, b}})\,dx\\
&\hskip15pt + (j+1)\int D^{1/2}_x (\p_xu \p_x^ju\,\chi_{_{\epsilon, b}}) D^{1/2}_x(\p_x^ju\,\chi_{_{\epsilon, b}})\,dx\\
&\hskip15pt + \underset{l=2}{\overset{j-1}{\sum}} \,c_l \int D^{1/2}_x (\p_x^l \,u \p_x^{j+1-l} u\,\chi_{_{\epsilon, b}}) D^{1/2}_x(\p_x^ju\,\chi_{_{\epsilon, b}})\,dx\\
&= A_{31}(t)+ A_{32}(t) +  \underset{l=2}{\overset{j-1}{\sum}} A_{3(l+1)}(t).
\end{split}
\end{equation}

The estimates for $A_{31}$ and $A_{32}$ are similar to those described in \eqref{bo32}--\eqref{bo39} in the case $j=2$. So we restrict ourselves to consider $\{A_{3l}\}_{l=2}^{j-1}$.
\vskip3mm

 We consider first the case $j=3$ where the sum in \eqref{bo53} reduces to the term $A_{33}(t)$.
 \begin{equation}\label{bo54}
 \begin{split}
 |A_{33}(t)| & = c\, |\int D^{1/2}_x(\p_x^2u\,\p_x^2u\, \chi_{_{\epsilon, b}}) D^{1/2}_x(\p_x^3u \,\chi_{_{\epsilon, b}})\,dx|\\
 &\le c\|D^{1/2}_x(\p_x^2u\, \chi_{_{\epsilon/5, \epsilon}}\,\p_x^2u\,\chi_{_{\epsilon, b}})\|_2^2+ \|D^{1/2}_x(\p_x^3u \,\chi_{_{\epsilon, b}})\|_2^2.
 \end{split}
 \end{equation}
 The last term above is the quantity to be estimated so we just need to concentrate in the first term on the right hand side of \eqref{bo54}. Thus
 by \eqref{FD} and \eqref{INT} we deduce that
 \begin{equation}\label{bo55}
 \begin{split}
 \|D^{1/2}_x(\p_x^2u &\chi_{_{\epsilon/5, \epsilon}}\,\p_x^2u \chi_{_{\epsilon, b}})\|_2^2\\
 &\le c\|D^{1/2}_x(\p_x^2u \chi_{_{\epsilon/5, \epsilon}})\|_4\|\p_x^2u \chi_{_{\epsilon, b}}\|_4 \\
&\hskip15pt +c\,\|\p_x^2u \chi_{_{\epsilon/5, \epsilon}}\|_4\|D^{1/2}_x(\p_x^2u \chi_{_{\epsilon, b}})\|_4\\
 &\le c \|\p_x(\p_x^2u \chi_{_{\epsilon/5, \epsilon}})\|_2^{3/4}\|\p_x^2u\, \chi_{_{\epsilon/5, \epsilon}}\|_2^{1/4}\\
&\hskip15pt \times \|D^{1/2}_x(\p_x^2u \chi_{_{\epsilon, b}})\|_2^{1/2}\|\p_x^2u \chi_{_{\epsilon, b}}\|_2^{1/2} \\
 &\hskip15pt + c\,\|D^{1/2}_x(\p_x^2u \chi_{_{\epsilon/5, \epsilon}})\|_2^{1/2}\|\p_x^2u \chi_{_{\epsilon/5, \epsilon}}\|_2^{1/2}\\
&\hskip15pt \times\|\p_x(\p_x^2u \chi_{_{\epsilon, b}})\|_2^{3/4}\|\p_x^2u \chi_{_{\epsilon, b}}\|_2^{1/4}\\
 &\equiv A_{331},
 \end{split}
 \end{equation}
 where all the terms in $A_{331}(t)$ involve at most derivatives of order three which are  bounded in $t\in[0,T]$  by hypothesis of induction (and the fact that $0\le \chi \le 1$).

 It is clear from the argument given in \eqref{bo54}--\eqref{bo55} for the case $j=3$ that the general case $j\ge 4$ follows by using the
 same method which combines  the inequalities \eqref{FD}, \eqref{INT}, Sobolev embedding theorem and induction hypothesis.

 Inserting all the estimates above in \eqref{bo41}  one gets that \eqref{bo6} holds for $m=j$.

 This completes the proof of the first step, part (a), of our inductive argument.

 Next we consider the next step in our induction argument:

 \vspace{3mm}
 \noindent\underline{Part (b)} : So we assume that \eqref{bo3} holds with $m=j+1$, our induction hypothesis, i.e., for $k=2,...,j$
 \begin{equation}\label{bo55b}
 \begin{split}
 \underset{0\le t\le T}{\sup} &\int (\p_x^{k}u)^2 \chi_{_{\epsilon, b}}(x+vt)\,dx\\
 &+\int\limits_0^T \int (D^{1/2}_x(\p_x^{k}u \eta_{_{\epsilon, b}}))^2 \,dxdt  < c,
 \end{split}
 \end{equation}
 and recall that in part (a) of our argument we have proven \eqref{bo6} with $m=j$.
 A familiar argument yields the identity
 \begin{equation}\label{bo55c}
\begin{split}
&\frac12 \frac{d}{dt} \int (\p_x^{j+1}u)^2\, \chi_{_{\epsilon, b}}(x+vt)\,dx -
\underset{A_1}{\underbrace{v\int (\p_x^{j+1}u)^2\, \chi_{_{\epsilon, b}}'(x+vt)\,dx}} \\
&-\underset{A_2}{\underbrace{\int \hil\p_x^{j+1}\p_x^2 u\, \p_x^{j+1}u\, \chi_{_{\epsilon, b}}(x+vt)\,dx}}\\
&+\underset{A_3}{\underbrace{\int \p_x^{j+1}(u\p_xu)\,\p_x^{j+1}u\,\chi_{_{\epsilon, b}}(x+vt)\,dx\,dx}} = 0.
\end{split}
\end{equation}

From our previous step, part (a),  \eqref{bo6} holds with $m=j$. Therefore we have that for $\epsilon'>0$ and $b>5\epsilon'$
\begin{equation}\label{56}
\int\limits_0^T\int (\p_x^{j+1}u)^2 \,\chi_{_{\epsilon', b}}'\,\chi_{_{\epsilon', b}}(x+vt)\,dxdt< c.
\end{equation}

Using \eqref{CL} after integrating in the time interval $[0,T]$ we have that
\begin{equation}\label{bo57}
\int\limits_0^T|A_1(t)|\,dt \le c = c(\epsilon, b, v, T).
\end{equation}

Next we consider $A_2(t)$ in \eqref{bo55c}. Thus
\begin{equation}\label{bo58}
\begin{split}
A_2&=\int \hil \p_x^{j+2}u\,\p_x^{j+2}u\,\chi\,dx +\int \hil \p_x^{j+2}u\,\p_x^{j+1}u \chi'\,dx\\
&=A_{21}+A_{22},
\end{split}
\end{equation}
where
\begin{equation}\label{bo59}
\begin{split}
A_{21}&=-\int \p_x^{j+2}u\,\hil(\p_x^{j+2}u\,\chi)\,dx \\
&=-\int \p_x^{j+2}u\,\hil\p_x^{j+2}u\,\chi\,dx-\int \p_x^{j+2}u\,[\hil;\chi] \p_x^{j+2}u\\
&= -A_{21}+ (-1)^{j+3} \int u \p_x^{j+2}[\hil; \chi]\p_x^{j+2}u\,dx.
\end{split}
\end{equation}
Thus
\begin{equation}\label{bo60}
A_{21}(t)=\frac12 (-1)^{j+3} \int u \p_x^{j+2}[\hil; \chi]\p_x^{j+2}u\,dx.
\end{equation}
So by the commutator estimate  \eqref{CE}
 and the conservation law
 \begin{equation}\label{bo61}
 \begin{split}
 |A_{21}(t)| &\le c\|\chi^{2j+4}\|_{\infty}\|u(t)\|_2^2\le c_j\,\|u_0\|_2^2.
 \end{split}
 \end{equation}

 To estimate $A_{22}$, recalling that $\eta^2=\chi'$, we write
 \begin{equation}\label{bo62}
 \begin{split}
 A_{22} & = \int \hil \p_x^{j+2}u\,\p_x^{j+1}u\,\eta^2(x+vt)\,dx\\
 &=\int  \hil ( \p_x^{j+2}u\,\eta)\,\p_x^{j+1}u\,\eta \,dx
  -\int [\hil; \eta] \p_x^{j+2}u\,\p_x^{j+1}u\,\eta\,dx\\
 &= A_{221}+A_{222}.
 \end{split}
 \end{equation}

Now
\begin{equation}\label{bo63}
\begin{split}
A_{221}&= \int \hil\p_x (\p_x^{j+1}u\,\eta)\,\p_x^{j+1}u\,\eta\,dx -\int \hil(\p_x^{j+1}u\,\eta') \p_x^{j+1}u\,\eta\,dx\\
&=\int D^{1/2}_x(\p_x^{j+1}u\,\eta) \,D^{1/2}_x(\p_x^{j+1}u\,\eta)\\
&\hskip15pt- \int \hil(\p_x^{j+1}u\,\eta') \p_x^{j+1}u\,\eta\,dx.
\end{split}
\end{equation}

Notice that the first term on the right hand side of \eqref{bo63} is positive (and will provide the smoothing effect)
and the second one can be bounded by
\begin{equation}\label{bo64}
\|\p_x^{j+1}u\,\eta'\|_2^2+\|\p_x^{j+1}u \eta\|_2^2
\end{equation}
which after integration in time is bounded using part (a) in our argument, i.e. \eqref{bo6} with $m=j$, and the claim
following   \eqref{2.8} together with \eqref{2.5} and the estimates \eqref{CL}.

Similarly,  $A_{222}$ can be estimated with the aid of \eqref{CE}.

So it remains only to consider $A_3$. Thus
\begin{equation}\label{bo65}
\begin{split}
A_3 &= \int u\,\p_x^{j+2}u\,\p_x^{j+1}u\,\chi\,dx+ (j+2)\int \p_xu\,(\p_x^{j+1}u)^2\,\chi \,dx\\
&\hskip15pt +\underset{k=2}{\overset{j}{\sum}} c_k\int \p_x^ku\,\p_x^{j+2-k}(\p_x^{j+1}u)\,\chi\,dx \hskip10pt (j\ge 2)\\
&= A_{30}+A_{31}+ \underset{k=2}{\overset{j}{\sum}} A_{3k}.
\end{split}
\end{equation}

Thus
\begin{equation}\label{bo66}
\begin{split}
A_{30}&=-\frac12 \int \p_x u \p_x^{j+1}u\,\p_x^{j+1}u\,\chi\,dx-\int u\p_x^{j+1}u\,\p_x^{j+1}u\,\chi'\,dx\\
& = A_{301}+A_{302}
\end{split}
\end{equation}
with
\begin{equation}\label{bo67}
|A_{301}(t)| \le \|\p_xu(t)\|_{\infty} \int (\p_x^{j+1} u)^2\,\chi\,dx
\end{equation}
where the last term is the quantity to be estimate which will be handled in Gronwall's inequality using \eqref{bo2} (ii) and
\begin{equation}\label{bo68}
|A_{302}(t)| \le \|u(t)\|_{\infty} \int (\p_x^{j+1}u)^2\, \chi'\,dx
\end{equation}
which after integration in time is bounded using part (a) of our
argument, i.e. \eqref{bo6} with $m=j$ and  \eqref{CL}.

The estimate for $A_{31}(t)$ is similar to that one described in \eqref{bo67} for $A_{301}(t)$.

Next, we consider the case $j=2$ where  in \eqref{bo65}  the term $A_{32}$ appears, i.e.
\begin{equation}\label{bo69}
\begin{split}
|A_{32}(t)| &\le |c_2 \int \p_x^2u \p_x^2 u (\p_x^3u)\,\chi\,dx|\\
&=|-\frac{c_2}{3} \int \p_x^2u \p_x^2u \p_x^2u \,\chi'\,dx|\\
&=|-\frac{c_2}{3} \int \p_x^2 u \eta (\p_x^2u)^2\,\eta\,dx|\\
&\le c\|\p_x^2u \,\eta\|_{\infty}\int (\p_x^2u)^2\,\eta\,dx.
\end{split}
\end{equation}

Since for the case $j=2$ we have that
\begin{equation}\label{bo70}
\int (\p_x^2u)^2\, \eta_{_{\epsilon,b}}(x+vt)\,dx\le \int (\p_x^2u)^2\,\chi_{_{\epsilon/5, \epsilon}}(x+vt)\,dx
\end{equation}
which is bounded in the time interval $[0,T]$. Now
\begin{equation}\label{bo71}
\begin{split}
\|\p_x^2u\,\eta\|_{\infty} &\le c \|\p_x(\p_x^2u\,\eta)\|_2^{1/2}\| \p_x^2u\,\eta\|_2^{1/2}\\
&\le c \big(\|\p_x^3u\,\eta\|_2^{1/2}+\|\p_x^2u\,\eta'\|_2^{1/2}\big)\|\p_x^2u\,\eta\|_2^{1/2}.
\end{split}
\end{equation}

From the previous step we have that
\begin{equation*}
\underset{0\le t\le T} {\sup}\big(\|\p_x^2u\,\eta\|_2+\|\p_x^2u\,\eta'\|_2\big)\le c.
\end{equation*}
So
\begin{equation}\label{bo72}
\|\p_x^2u\,\eta\|_{\infty}\le c+ \|\p_x^3u\,\eta'\|_2^2
\end{equation}
which is bounded after integrating in time from the part (a), \eqref{bo6} with $m=j$ (in this case $j=3$), in our argument.

If $\,j\ge 2$ the new terms in \eqref{bo65} can be handled in a similar manner combining Sobolev theorem, the
induction hypothesis  and Gronwall's inequality. This completes the induction argument.

 To justify the above formal computations we shall follow the following standard argument.

 Consider data $u_0^{\tau}=\rho_{\tau} \ast u_0$ with $\rho\in C^{\infty}_0(\R)$, $\text{supp}\,\rho\in (-1,1)$, $\;\rho\ge 0$, $\;\;\displaystyle\int \rho(x)\,dx=1$ and
 \begin{equation*}
  \rho_{\tau}(x)=\frac{1}{\tau} \rho\big(\frac{x}{\tau}\big), \;\;\tau>0.
 \end{equation*}

For $\tau>0$ consider  the  solutions $u^{\tau}$ of the IVP \eqref{bo1} with data $u_0^{\tau}$  where $(u^{\tau})_{\tau>0}\subseteq C([0,T]: H^{\infty}(\R))$.

 Using the continuous dependence of the solution upon the data we have that
\begin{equation}
\label{123}
\sup_{t\in [0,T]}\;\| u^{\tau}(t)-u(t)\|_{3/2,2}\,\downarrow 0\;\;\;\;\text{as}\;\;\;\;\tau\,\downarrow 0.
\end{equation}
 Applying our argument  to the smooth solutions $u^{\tau}(\cdot,t)$ one gets that
\begin{equation}\label{124}
\sup_{[0,T]} \int (\partial^m_x u^{\tau})^2\, \chi_{\epsilon,b}(x+vt)\,dx\le c_0
\end{equation}
for any $\epsilon>0$, $b\ge 5\epsilon$, $v>0$, $c_0= c_0(\epsilon;b;v)>0$  but independent of $\tau>0$
since for $0<\tau<\epsilon$
$$
(\partial_x u_0^{\tau})^2\chi_{\epsilon,b}(x)=(\partial_x(\rho_{\tau} \ast u_0))^2 \chi_{\epsilon,b}(x)=(\rho_{\tau}\ast\partial_xu_0 1_{[0,\infty)})^2 \chi_{\epsilon,b}(x).
$$
Combining \eqref{123} and \eqref{124} and a weak compactness argument one gets that
\begin{equation}\label{125}
\sup_{[0,T]} \int (\partial_x u)^2\, \chi_{\epsilon,b}(x+vt)\,dx\le c_0
\end{equation}
which is the desired result. A similar argument provides the estimate for the second term in the left hand side of \eqref{bo4}.

This completes the proof of Theorem A.

\vskip.2in

\section*{Acknowledgments} P. I. was supported by Universidad Na\-cio\-nal de Co\-lom\-bia-Me\-de\-ll\'in. F. L. was partially supported
by CNPq and FAPERJ/Brazil. G. P. was  supported by a NSF grant  DMS-1101499.

\end{document}